\newtheorem{thm}{Theorem}[section]
\newtheorem{lem}[thm]{Lemma}
\theoremstyle{definition}
\theoremstyle{remark}
\newtheorem{rem}[thm]{Remark}
\numberwithin{equation}{section}
\def\bx{{\bf x}}
\def\by{{\bf y}}
\def\balpha{{\bf \alpha}}
\def\bbeta{{\bf \beta}}
\def\R{\mathbb R}
\def\E{\mathbb E}
\def\eps{\varepsilon}
\def\ind{\operatorname{ind}}
\def\Id{\operatorname{Id}}
\def\med{M}
\def\net{{\cal N}}
\newcommand{\card}[1]{\operatorname{card} #1}
\newcommand{\K}[1]{}  
\newcommand{\noK}[1]{#1}  
\def\polylog{\operatorname{polylog}}
\def\C{{\mathbb C}}
\def\ln{\operatorname{log}}
\def\log{\operatorname{log}}
\newcommand{\hr}[1]{}
\newcommand{\na}[1]{}
\def\tp{^*\xspace}
\title{Fast and RIP-optimal transforms}
\author{Nir Ailon\thanks{N.\ Ailon is with the Technion Israel Institute of Technology}, Holger Rauhut\thanks{H.\ Rauhut is with the Hausdorff Center for Mathematics and
the Institute for Numerical Simulation, University of Bonn, Endenicher Allee 60, 53115 Bonn, Germany, \rm{rauhut@hcm.uni-bonn.de}}}
\begin{document}

\maketitle
\begin{abstract}
We study constructions of  
$k \times n$ matrices $A$ that both (1) satisfy the restricted isometry property (RIP) 
at sparsity $s$ with optimal parameters, and  (2) are efficient in the sense that only $O(n\log n)$ operations are required to
compute $Ax$ given a vector $x$.  Our construction is based on repeated application of independent transformations of the form $DH$, where $H$ is a Hadamard or Fourier transform and $D$ is a 
diagonal matrix with random $\{+1,-1\}$ elements on the diagonal, followed by 
any 
$k \times n$ matrix of orthonormal rows
(e.g.\ selection of $k$ coordinates).  We provide guarantees (1) and (2) for 
a larger regime of parameters for which such constructions were previously unknown.  Additionally, our construction
does not suffer from the extra poly-logarithmic factor multiplying
the number of observations $k$ as a function of the sparsity $s$, as present in the currently best known RIP estimates 
for partial random Fourier matrices and other classes of structured random matrices.
\end{abstract}


\section{Introduction}

The theory of compressive sensing predicts that sparse vectors can be stably reconstructed from a small number of linear measurements
via efficient reconstruction algorithms including $\ell_1$-minimization \cite{do06-2,fora11}. The restricted isometry property (RIP) of the measurement matrix streamlines 
the analysis of various reconstruction algorithms \cite{cata06,carota06-1,fo10,fora13}. All
known matrices that satisfy the RIP in the optimal parameter regime (see below for details) are based on randomness. Well-known
examples include Gaussian and Bernoulli matrices where all entries are independent. Unfortunately, such matrices do not 
possess any structure and therefore no fast matrix-vector multiplication algorithm. 
The latter is important for speed-up of recovery algorithms. This article addresses constructions of
matrices that satisfy the RIP in the optimal parameter regime and have fast matrix-vector multiplication algorithms.

A vector $x \in \C^n$ is said to be $s$-sparse if the number of nonzero entries of $x$ is at most $s$.
A matrix $A \in \C^{k \times n}$ satisfies the RIP
with respect to parameters $(s,\delta)$ if, for all $s$-sparse vectors $x\in \C^n$,
\begin{equation}\label{basic} 
(1-\delta)\|x\|_2 \leq \|Ax\|_2 \leq (1+\delta)\|x\|_2,
\end{equation}
where $\|\cdot\|_2$ denotes the Euclidean norm.\footnote{In much of the related literature, the definition of RIP uses \emph{squared} Euclidean norms.  The definition (\ref{basic}) is, however, more convenient for our purposes. Of course, both versions are equivalent up to a transformation
of the parameter $\delta$.}
\na{(I moved the footnote from inside the sentence to after the period - I think it's the correct way to do so in English, at least that's what some anoynous pedantic reviewer told me once)}
If $A$ satisfies the RIP with parameters $(2s, \delta^* )$ for a suitable $\delta^* < 1$ then a variety of recovery algorithms reconstruct 
an $s$-sparse vector exactly from $y = A x$. Moreover, reconstruction is stable under passing from sparse to approximately sparse vectors
and under adding noise on the measurements. The value of $\delta\tp $ depends 
only on the reconstruction algorithm \cite{carota06-1,netr08,blda09,fo10,fora13}. 

It is well-known by now \cite{badadewa08,cata06,mepato09} that a 
Gaussian random matrix (having independent normal distributed entries of variance $1/m$)
satisfies the RIP with parameters $(s,\delta)$ with probability at least $1-e^{-c \delta^2 k}$ if
\[
k \geq C \delta^{-2} s \ln(n/s), 
\]
where $c,C>0$ are universal constants. Using lower bounds for Gelfand widths  of $\ell_p$-balls for $0 < p \leq 1$, 
it can be shown that $k$ must be at least $C_{\delta} s\log(n/s)$ for  
the RIP to hold \cite{FoucartPRU10}. 
It can further be shown \cite{fora13} that the constant (as a function of $\delta$) satisfies
$C_{\delta} \geq C \delta^{-2}$.
Since we will always assume in this paper that 
$s \leq Cn^{1/2}$, $\log(n/s)$ is equivalent to $\log(n)$ up to constants.
Hence, we will say that a $k \times n$ matrix is {\bf RIP-optimal} at $s$ 
if it satisfies the RIP with $(s,\delta)$ for 
\[
\delta = C \sqrt{\frac k {s\log n}}\ .
\]
 (The reader should keep in mind that for large $s$,
RIP optimality should be defined to hold when $k$ is at most $C\delta^{-2}s\log(n/s)$.)

The restricted isometry property is closely connected to Johnson-Linden\-strauss embeddings.
We say that a random $k \times n$ matrix $A$ satisfies the Johnson-Lindenstrauss  property (JLP) with parameters 
$(N,\delta)$ if for any set $X \subseteq \R^n$ of cardinality at most 
$N$  (\ref{basic}) holds uniformly for all $x\in X$ with constant probability.
It is well known that a matrix of independently drawn subgaussian elements satisfies JLP if $k \geq C\delta^{-2}\log N$.
Specializations of this fact 
to Gaussians and to Bernoulli random variables can be found in \cite{Indyk:1998:ANN:276698.276876,DBLP:journals/jcss/Achlioptas03}. The general claim is  obtainable by noting that the subgaussian
property is the crux of these proofs.  If $A$ satisfies JLP with $(N,\delta)$ for $k \leq C\delta^{-2}\log N$, then we say that $A$ is JLP-optimal.

The JLP and RIP properties are known to be almost equivalent, in a certain sense.   
One direction is stated as follows: A (random) $k \times n$  matrix satisfying JLP 
with $(N,\delta)$ satisfies
RIP with $(\hr{c}C(\log N)/(\log n), \delta)$ \na{(agreed)} with constant probability \cite{badadewa08,mepato09}.   
Hence, for any arbitrarily small $\mu>0$ an RIP-optimal matrix can be obtained by drawing a  JLP-optimal matrix with $k=C \delta^{-2}s \log n$.
The derivation of RIP from JLP is a specialization of JLP to a set $X$ consisting of a $\eps$-net of $s$ sparse unit vectors, for $\eps = 0.1$ say, which has cardinality
$N \leq (C n)^s$.

The other direction is a remarkable recent result by Krahmer and Ward \cite{DBLP:journals/siamma/KrahmerW11} implying that if $A$ has RIP with $(s,\delta/4)$, 
then $AD$ has JLP with $(N, \delta)$ as long as $N \leq 2^s$, where $D$ is a diagonal matrix with independent random signs ($\pm 1$) on the diagonal.  Notice that from this
result, RIP-optimality of $A$ does not imply JLP-optimality of $AD$, because RIP-optimality implies
that the embedding dimension of $k$ is at least $C\delta^{-2} s\log n$, which suffers from an additional factor of $\log n$ compared to the JLP-optimality guarantee bound
of $C\delta^{-2}\log N = C\delta^{-2}s$  (for $N=2^s$).  From this observation we intuitively conclude that RIP-optimality is weaker than JLP-optimality, 
and hence expect that RIP-optimal constructions should be easier to obtain.  The main results of this paper, roughly speaking, confirm this by providing constructions of RIP-optimal matrices 
which are simpler than previously known constructions\na{\cite{DBLP:journals/dcg/AilonL09,aich09}} \hr{WHICH ONES?}that relied on JLP optimality.

\subsection{Known Constructions of RIP-optimal and JLP-optimal matrices}

No deterministic RIP-optimal matrix constructions are known.  Deterministic constructions of RIP matrices are known only for a grossly suboptimal regime of parameters.
See \cite{DBLP:journals/siamma/KrahmerW11} for a nice survey of such constructions.

Of particular interest are RIP or JLP matrices $A$ that are \emph{efficient} in the sense that for any vector $x\in \C^n$, $Ax$ can be computed in time $O(n\log n)$. 
Such constructions are known for JLP-optimal (and hence also RIP-optimal) matrices as long as $k \leq n^{1/2-\mu}$ for any arbitrarily small $\mu$ \cite{DBLP:journals/dcg/AilonL09}.
This is achieved with the transformation $BHD^{(1)}HD^{(2)}H\cdots HD^{(r)}$, 
where $D^{(i)}$ are independent random sign diagonal matrices, $H$ are Hadamard transforms
and $B$ is a subsampled (and rescaled) Hadamard transform, where the subset 
of sampled coordinates is related to a carefully constructed dual binary code, and $r$ is at most $C/\mu$.
For larger $k$,
the best efficient constructions satisfying RIP are due to Rudelson and Vershynin \cite{ruve08} 
with $k \geq Cs\log^4 n$, namely  a factor of $\log^3 n$ away from optimal, see also \cite{ra10}. (Note that
at least two of the $\log n$ factors can be improved to $\log s$, but in light of 
the aforementioned positive results, we can assume that $s$ is at least, say, $n^{1/3}$.)
This was recently improved by Nelson et al. to only $\log^2 n$ factors away from optimal \cite{NelsonPW12}.
The construction in \cite{ruve08} is a Fourier transform 
followed by a subsampling 
operator.  
Another family of RIP almost optimal matrices with a fast transform algorithm is that of partial random circulant matrices and time-frequency structured random matrices.
The best known results in that vein have recently appeared in the work of Krahmer, Mendelson and Rauhut \cite{KrahmerMR12}, where RIP matrices of almost optimal embedding
dimension $k= Cs\log^4 n$ are designed. 
These constructions improve on previous work in \cite{rarotr12}.

\subsection{Contribution}
In this work we construct RIP-optimal, efficient matrices for the regime $s\leq n^{1/2-\mu}$ that are simpler than those implied from the JLP construction in \cite{DBLP:journals/dcg/AilonL09}.
For $s\leq C n^{1/3}/\log^{2/3} n$, we show that the transformation $SHDHD'H$ is RIP-optimal, where $H$ is a Hadamard or Fourier transform, $D$ and $D'$ are independent  random sign diagonal matrices
and $S$ is an \emph{arbitrary} deterministic subsampling (and rescaling) matrix of order $k$.  This complements a previous RIP result in a similar parameter regime of $s \leq n^{1/3}/\polylog(n)$ \hr{(Should we be more specific here? Danger of confusion between the two regimes just mentioned!)} \na{done}
implied by a JLP construction by Ailon and Chazelle in \cite{aich09}, 
in which the transformation $THD$ is used, where $T$ is a sparse random matrix with $k^3$ nonzero elements.
 The surprising part about our construction is that
$S$ can be taken to 
be an arbitrary (deterministic) subsampling matrix.  The random subsampling used in the construction \cite{aich09} is, in a sense, traded off with two more Fourier transforms and one more 
random diagonal matrix here.
For $s\leq Cn^{1/2-\mu}$, we show that the transformation $PD^{(1)}HD^{(2)}H\cdots D^{(r)} H$
is RIP-optimal and efficient, where the $D^{(i)}$'s are as above, $P$ is an \emph{arbitrary} deterministic matrix with properly normalized, pairwise orthogonal rows and $r$ is at most $C/\mu$.
This is simpler than the  RIP-optimal efficient construction implied by the aforementioned  JLP-optimal efficient construction in \cite{DBLP:journals/dcg/AilonL09}, because no binary code designs are necessary.

Our main proof techniques involve concentration inequalities of vector valued Rademacher chaos of degree $2$. For the second construction an additional bootstrapping argument is applied 
that, roughly speaking, shows that the RIP parameters of $ADHD'H$ are better than those of $A$.

\section{Notation and Main Results}

Throughout, the letter $C$ denotes a general global constant, whose value may change from appearance to appearance.
The integer $n$ denotes the ambient dimension, $k \leq n$ denotes the embedding dimension, and 
$\C^n$ denotes the $n$ dimensional
complex space with standard inner product. The usual $\ell_p$-norms are denoted by $\|\cdot\|_p$, the spectral norm of
a matrix $A$ by $\|A\|$ and the Frobenius norm as $\|A\|_F = \sqrt{\operatorname{trace}(A\tp A)}$.

\noindent
We let $H\in \C^{n\times n}$ denote a fixed matrix with the following properties:
\begin{enumerate}
\item $H$ is unitary,
\item the maximal absolute entry of $H$ is $\K{K}n^{-1/2}$\K{, for some number $K>0$ which we assume fixed},
\item the transformation $Hx$ given a vector $x\in \C^n$ can be computed in time $O(n\log n)$.
\end{enumerate}
 \K{The discrete Fourier matrix  is an example of such matrix (with $K=1$), and the discrete cosine transform is another (with $K=\sqrt 2$).   
The Walsh-Hadamard matrix (for $n=2^\ell$) is another such (real valued) example.}
\noK{Both the discrete Fourier matrix and the Walsh-Hadamard matrix are examples of such matrices.}
\noK{Note that the upper bound of $n^{-1/2}$ in property 2. above could be replaced by $Kn^{-1/2}$ for any constant
$K$, thus encompassing transformations such as the discrete cosine transform (with $K=\sqrt 2$) with little
affect on the guarantees.  We have
decided to concentrate on the case $K=1$ for simplicity.}

For any vector $z\in \C^n$, $D_z$ denotes a diagonal
matrix with the elements of $z$ on the diagonal.
For a subset $\Omega$ of $\{1,\dots, n\}$, let  $\ind(\Omega)\in \R^n$ denote the vector with $1$ at coordinates $i\in \Omega$ and $0$ elsewhere.  Then define $P_\Omega = D_{\ind(\Omega)}$ and
$R_\Omega \in \R^{k \times n}$ to be the map that restricts a vector in $\R^n$ to its entries in $\Omega$, i.e., a subsampling operator.

Recall that a $k \times n$ matrix $A$ has the RIP property with respect to parameters $(s,\delta)$ where $s$ is an integer and $\delta>0$, if for any $s$-sparse unit vector $x\in \C^n$,
$$ 1-\delta  \leq \|Ax\|_2 \leq 1+\delta\ .$$
(Note that we allow $\delta>1$, unlike typical definitions of RIP).
For a fixed sparsity parameter $s$, we denote by $\delta_s(A)$
the infimum over all $\delta$ such that $A$ has the RIP property
with respect to parameters $(s,\delta)$.   We say that $A$ 
is RIP optimal at a given sparsity parameter $s$ if 
\begin{equation}\label{defopt} \delta_s(A) \leq C \sqrt{\frac{s\log n} k}\ .
\end{equation} 

A random vector $\epsilon$ with independent entries that take the values $\pm 1$ with equal probability is called a Rademacher vector.

Our first main result provides a simple RIP-optimal matrix with a fast transform algorithm for small sparsities $s = {\cal O}(n^{1/3}/\log^{2/3} n)$.

\begin{thm}\label{thm1}
Let $\epsilon, \epsilon'\in \{\pm 1\}^n$ be two independent Rademacher vectors,
and let $\Omega$ be any subset of $\{1,\dots, n\}$ of size $k$.
The $k \times n$ random  matrix $A = R_\Omega HD_{\epsilon}H D_{\epsilon'} H$
is RIP-optimal
for the regime  $k \leq \sqrt{n/s}$. More precisely, if 
\begin{equation}\label{cond:k}
\sqrt{n/s} \geq k \geq C\K{K} \delta^{-2} s \ln n,
\end{equation} 
then $A$ satisfies the RIP \eqref{basic} with probability at least $1-e^{-C\delta^2 k}$.
In particular, the conditions on $k$ entail
\[
s \leq \frac{C\delta^{4/3} n^{1/3}}{\K{K^{2/3}}\ln^{2/3} n}\ .
\]
\end{thm}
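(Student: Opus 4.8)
The plan is to reduce the RIP, by a standard net argument, to a deviation estimate for $\|Ax\|_2$ at a \emph{single} $s$-sparse unit vector, and to obtain that estimate by treating $Ax$ as a vector-valued Rademacher chaos of degree $2$. After the usual normalization (rescale $R_\Omega$ by $\sqrt{n/k}$ so that the columns of $R_\Omega H$ have unit Euclidean norm), a routine net argument produces a set $\net$ of $s$-sparse unit vectors with $|\net|\le (C_0n)^s$ such that the RIP \eqref{basic} follows once $|\,\|Ax\|_2-1\,|\le\delta$ holds for every $x\in\net$. So the whole problem is: for one $s$-sparse unit vector $x$, bound $\Pr(|\,\|Ax\|_2-1\,|>\delta)$ sharply enough to survive a union bound over $(C_0n)^s$ vectors.

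Fix such an $x$ and set $w:=Hx$; since $x$ is $s$-sparse and $|H_{il}|=n^{-1/2}$, the vector $w$ is automatically flat: $\|w\|_2=1$, $\|w\|_\infty\le\sqrt{s/n}$. First I would expand $A$ entrywise,
\[
Ax=\sum_{l,j=1}^{n}\epsilon_l\,\epsilon'_j\,b_{lj},\qquad (b_{lj})_i=\sqrt{n/k}\,H_{il}H_{lj}w_j\ \ (i\in\Omega),\quad (b_{lj})_i=0\ \ (i\notin\Omega),
\]
exhibiting $Ax\in\C^k$ as a \emph{decoupled, vector-valued Rademacher chaos of degree $2$} in the independent Rademacher vectors $\epsilon,\epsilon'$. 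Next I would record, via a short computation with $H$ unitary, that $\sum_{l,j}b_{lj}b_{lj}^{*}=\tfrac1k I_k$ (hence $\E\|Ax\|_2^2=1$) and that the two ``variance parameters'' of the chaos are
\[
V_2:=\Big\|\sum_{l,j}b_{lj}b_{lj}^{*}\Big\|^{1/2}=\frac1{\sqrt k},\qquad
V_{\mathrm{op}}:=\sup_{\|u\|_2=1}\Big\|\big(\langle u,b_{lj}\rangle\big)_{l,j=1}^{n}\Big\|_{2\to2}\le\sqrt{\tfrac sn}.
\]
The bound on $V_{\mathrm{op}}$ is where the structure of the construction is used: $\langle u,b_{lj}\rangle=\sqrt{n/k}\,\tilde u_l H_{lj}w_j$ with $\tilde u_l:=\sum_{i\in\Omega}\bar u_i H_{il}$, so $\big(\langle u,b_{lj}\rangle\big)_{l,j}=\sqrt{n/k}\,D_{\tilde u}HD_w$ and $\|D_{\tilde u}HD_w\|\le\|\tilde u\|_\infty\|w\|_\infty\le\sqrt{k/n}\cdot\sqrt{s/n}$; here $\|w\|_\infty\le\sqrt{s/n}$ is the flatness just noted, and $\|\tilde u\|_\infty\le\sqrt{k/n}$ holds by Cauchy--Schwarz because $u$ is supported on a set of size $k$ and $|H_{il}|=n^{-1/2}$.

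Then I would apply a Bernstein/Hanson--Wright-type concentration inequality for the Euclidean norm of a decoupled degree-$2$ vector-valued Rademacher chaos: for $0<t\le1$,
\[
\Pr\!\big(\,\big|\,\|Ax\|_2-\E\|Ax\|_2\,\big|>t\,\big)\;\le\;2\exp\!\Big(-c\,\min\big(t^2/V_2^{2},\ t/V_{\mathrm{op}}\big)\Big)\;=\;2\exp\!\Big(-c\,\min\big(t^2k,\ t/V_{\mathrm{op}}\big)\Big),
\]
where $|\E\|Ax\|_2-1|\le C/k$ is negligible beside $\delta$. Since $V_{\mathrm{op}}\le\sqrt{s/n}$ and $k\le\sqrt{n/s}$, we have $kV_{\mathrm{op}}\le1$, so $t/V_{\mathrm{op}}\ge tk\ge t^2k$ for $t\le1$ and the minimum collapses to $t^2k$: the tail is the \emph{Gaussian-type} bound $2\exp(-ct^2k)$, with \emph{no} spurious logarithmic factor. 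Choosing $t\asymp\delta$, and using $k\ge C\delta^{-2}s\ln n$ with $C$ large (so that $c\delta^2k\ge 2\log|\net|$), a union bound over $\net$ gives
\[
\Pr\!\Big(\ \sup_{x\in\net}\big|\,\|Ax\|_2-1\,\big|>\delta\ \Big)\;\le\;(C_0n)^s\cdot 2\exp(-c\delta^2k)\;\le\;\exp(-c'\delta^2k),
\]
which is the assertion; the displayed consequence $s\le C\delta^{4/3}n^{1/3}/\ln^{2/3}n$ then follows by elementary algebra from $C\delta^{-2}s\ln n\le k\le\sqrt{n/s}$.

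The hard part will be the chaos concentration inequality above \emph{with exactly these parameters}. Forcing the sub-Gaussian scale down to $V_2=k^{-1/2}$ — rather than the $\sqrt{(\log n)/k}$ one gets by first conditioning on $\epsilon'$ and then running a scalar Hanson--Wright estimate in $\epsilon$ (the mechanism behind Fast-Johnson--Lindenstrauss bounds) — is precisely what removes the superfluous $\sqrt{\log n}$ and yields RIP-\emph{optimality}; the rare large spikes of $HD_{\epsilon'}Hx$ get relegated to the sub-exponential term $t/V_{\mathrm{op}}$, which, thanks to $kV_{\mathrm{op}}\le1$, only activates at scales far above $\delta$. Establishing such an inequality (via the entropy method, or by writing $\|\cdot\|_2$ as a supremum over a net of the dual sphere in $\C^k$ and applying a deviation bound for chaos processes) and verifying the two identities $V_2=k^{-1/2}$ and $V_{\mathrm{op}}\le\sqrt{s/n}$ is essentially all the work; the remaining steps are bookkeeping.
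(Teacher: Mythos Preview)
Your outline is the right one and in fact matches the paper: view $Ax$ as a degree-$2$ vector-valued Rademacher chaos, apply a Talagrand-type deviation inequality, then union-bound over a net of $s$-sparse unit vectors. Your computation of the ``operator'' parameter $V_{\mathrm{op}}=\sup_{\|u\|=1}\|B_u\|\le\sqrt{s/n}$ is exactly the paper's bound on $U$, and the reduction of the tail to the sub-Gaussian branch via $kV_{\mathrm{op}}\le 1$ is also the same.

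The gap is the concentration inequality itself. The Gaussian-scale parameter in the vector-valued chaos inequality is \emph{not} your deterministic quantity
\[
V_2=\Big\|\sum_{l,j} b_{lj}b_{lj}^{*}\Big\|^{1/2}=\sup_{\|u\|=1}\|B_u\|_F,
\]
but rather (Talagrand) the random quantity
\[
V=\E\sup_{\|u\|=1}\big(\|B_u\epsilon\|_2^2+\|B_u^{*}\epsilon'\|_2^2\big)^{1/2}.
\]
These two are not interchangeable: take $b_{lj}=e_j$ for all $l$ (so the chaos equals $(\sum_l\epsilon_l)\,\epsilon'$). Then $V_2=\sqrt n$ and $V_{\mathrm op}=\sqrt n$, yet $\|Ax\|_2=\sqrt n\,|\sum_l\epsilon_l|$ deviates by order $n$ with constant probability, which violates your inequality. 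So the inequality you wrote cannot be established in the generality you state it; neither the entropy method nor a naive net on the dual sphere (scalar Hanson--Wright at each $u$ and union bound over $\sim C^k$ points) will produce it, because the union bound eats a factor $\sqrt{k}$ against a sub-Gaussian scale $1/\sqrt k$.

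What the paper actually does---and what your ``net of the dual sphere'' hint really leads to---is bound Talagrand's $V$ in this particular instance. One writes $\nu(u)=(\|B_u\epsilon\|^2+\|B_u^{*}\epsilon'\|^2)^{1/2}$, observes that for fixed $\epsilon,\epsilon'$ it is a seminorm in $u$, and that for fixed $u$ it is the norm of a \emph{degree-$1$} Rademacher sum with median $\le \|B_u\|_F\le 1/\sqrt k$ and weak variance $\sigma_{\nu(u)}\le 2V_{\mathrm{op}}\le 2\sqrt{s/n}$. A $0.1$-net on the $k$-dimensional dual sphere plus the degree-$1$ concentration then gives
\[
V\;\le\; C\Big(\tfrac{1}{\sqrt k}+\sqrt{\tfrac{s}{n}}+\sqrt{k}\cdot\sqrt{\tfrac{s}{n}}\Big),
\]
and it is precisely here that the hypothesis $k\le\sqrt{n/s}$ is used a second time, to make the last term $\le C/\sqrt k$. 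Only after this does one obtain the sub-Gaussian tail $\exp(-ct^2k)$ around the median, and the rest (median-to-$1$ correction, net over $s$-sparse vectors) is as you describe. In short: your $V_2$ is the right \emph{target}, but it is not the right \emph{parameter}; showing $V\le C V_2$ in this instance is the genuine technical content, not bookkeeping.
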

Clearly, $A = R_\Omega HD_{\epsilon}H D_{\epsilon'} H x$ can be computed in ${\cal O}(n \log n)$ time by assumption on $H$.
Also note that \eqref{cond:k} implies a restriction on $\delta$ for which the result applies, namely
\begin{equation}\label{thm1:rest:delta}
\delta \geq c\frac{s^3 \log n}{n}.
\end{equation}

Our second main result gives a RIP-optimal matrix construction with a fast 
transform for the enlarged parameter regime $s = O(\sqrt{n}/\log n)$ and $k = O(n/(s\log n))$.

\begin{thm}\label{thm2}
Assume $s_0 \leq s \log n \leq k \leq \sqrt n$ and 
$\kappa = C\sqrt{\frac{sk\log n} n} < 1/2$, where $s_0$ is a global constant.
Let $A$ be an arbitrary $k \times n$  matrix satisfying $AA\tp  = \frac n k \Id_k$.
Let $r = \left  \lceil \frac{ -\log(2\sqrt{n/k})}{\log \kappa}\right \rceil$, and 
let $\epsilon_{(1)},\dots, \epsilon_{(r)} \in \{\pm 1\}^n$ denote independent Rademacher vectors.
Then the $k \times n$ matrix 
\begin{equation}
\hat A = A D_{\epsilon_{(1)}}HD_{\epsilon'_{(1)}} H D_{\epsilon_{(2)}}HD_{\epsilon'_{(2)}} H
\cdots
D_{\epsilon_{(r+1)}}HD_{\epsilon'_{(r+1)}} H
\end{equation}
is RIP-optimal with probability at least $0.99$, that is, \eqref{basic} holds if $k \geq C \K{K} \delta^{-2} s \log n$.

In particular, if we strengthen the constraints by requiring $s \leq n^{1/2-\mu}$ for some global $\mu>0$, 
then $\hat A$ is also computationally efficient in the sense that $\hat A x$ can be computed in time ${\cal O}(n \log n)$.
\end{thm}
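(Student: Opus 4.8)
The plan is to set up a bootstrapping (self-improvement) recursion: a single stage $B \mapsto B\,D_\epsilon H D_{\epsilon'} H$ improves the RIP constant of $B$ from $\delta_s$ to roughly $\kappa\,\delta_s$ (plus a lower-order additive term), as long as $\delta_s$ is not yet at the target level. Concretely, I would first prove a one-step lemma: if $B$ is a $k\times n$ matrix with $BB\tp = \frac nk \Id_k$ and $\delta_s(B) = \delta$, then with high probability $\delta_s(B D_\epsilon H D_{\epsilon'} H) \le C\kappa\,\delta + C\sqrt{\tfrac{sk\log n}{n}}$, where $\kappa = C\sqrt{sk\log n/n}$. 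The key analytic input is a concentration inequality for a vector-valued Rademacher chaos of degree two: for a fixed $s$-sparse unit vector $x$, write $H D_{\epsilon'} H x = \sum_i \epsilon'_i v_i$ (so $\|HD_{\epsilon'}Hx\|_2^2$ is a scalar Rademacher chaos), then apply $D_\epsilon H$ and estimate $\|B D_\epsilon (\cdot)\|_2$; the relevant moments are controlled by the operator and Frobenius norms of the associated matrices, which in turn are small because $H$ has entries of size $n^{-1/2}$ (the "flatness" / incoherence property) and $B$ is a scaled partial isometry. One then takes a union bound over an $\eps$-net of $s$-sparse unit vectors (of cardinality $(Cn/s)^s$), which is exactly where the $s\log n$ factor enters and forces the $\kappa<1/2$ hypothesis.

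Second, I would iterate. Starting from $B_0 = A$, which has $\delta_s(A) \le \|A\| = \sqrt{n/k}$ trivially (since $AA\tp = \frac nk\Id$ means the singular values of $A$ are all $\sqrt{n/k}$), each of the $r+1$ stages multiplies the current RIP constant by $C\kappa < 1/2$ and adds $O(\sqrt{sk\log n/n}) = O(\kappa)$. After $j$ stages the RIP constant is at most $(C\kappa)^j \sqrt{n/k} + O(\kappa)$; choosing $r = \lceil -\log(2\sqrt{n/k})/\log\kappa\rceil$ makes the geometrically-shrinking first term drop below the target $\delta \asymp \sqrt{s\log n/k}$, while the accumulated additive error is a geometric series summing to $O(\kappa) = O(\sqrt{sk\log n/n}) \le O(\sqrt{s\log n/k})$ since $k\le\sqrt n$ forces $k^2\le n$ hence $k/n \le 1/k$. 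One must check that the intermediate matrices $B_j$ still satisfy the partial-isometry hypothesis of the one-step lemma: indeed $D_\epsilon$, $D_{\epsilon'}$ and $H$ are all unitary (up to the deterministic scaling folded into $A$), so $B_j B_j\tp = \frac nk\Id_k$ is preserved at every stage. A union bound over the $r = O(\log n)$ stages, with per-stage failure probability $e^{-c\delta^2 k}$ and $\delta^2 k \gtrsim s\log n \gg \log\log n$, keeps the total failure probability below $0.01$.

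Finally, for the efficiency claim: each factor $D_{\epsilon_{(i)}} H$ costs $O(n\log n)$ by the assumption on $H$ and the diagonal matrices are free, so $\hat A x$ costs $O(r\, n\log n)$; the point is to bound $r$. Since $\kappa \asymp \sqrt{sk\log n/n}$ and $k \ge s\log n$, when $s \le n^{1/2-\mu}$ one gets $\kappa \le \sqrt n^{-c\mu}$-ish — more carefully, $\sqrt{n/k} \le \sqrt{n/(s\log n)}$ and $\kappa$ is bounded away from $1$ by a power of $n$, so $r = \lceil \log(2\sqrt{n/k})/\log(1/\kappa)\rceil = O(1/\mu)$, a constant. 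Hence $\hat A x$ is computed in $O(n\log n)$ time.

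The main obstacle I expect is the one-step lemma — specifically, getting the Rademacher-chaos concentration with the right dependence, namely a contraction factor genuinely of order $\kappa = \sqrt{sk\log n/n}$ rather than something with extra log factors, uniformly over the net. This requires carefully tracking how the Frobenius and operator norms of the chaos coefficient matrices depend on $B$ (which carries a $\sqrt{n/k}$ blow-up in norm) against the $n^{-1/2}$ smallness of $H$'s entries, and then combining the chaos tail bound with the net cardinality so that the exponent $\delta^2 k$ dominates $s\log n$. The additive $\sqrt{sk\log n/n}$ term in the recursion, and verifying it telescopes to something below the target, is the other delicate bookkeeping point, but it is routine once the one-step estimate is in hand.
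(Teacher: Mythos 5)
Your proposal correctly identifies the overall strategy of the paper: bootstrap the RIP constant by appending stages $D_\epsilon H D_{\epsilon'} H$, analyze each stage via concentration of a vector-valued degree-$2$ Rademacher chaos (Talagrand), and control the number of stages via the geometric-decay factor $\kappa$. However, there is a genuine gap in the mechanism you propose for the one-step estimate.

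You write that the chaos parameters are controlled because "$H$ has entries of size $n^{-1/2}$ (flatness) and $B$ is a scaled partial isometry." But the operator-norm information $\|B\| = \sqrt{n/k}$ combined with the flatness estimate $\|D_\alpha H D_\beta H x\|_2 \leq n^{-1/2}$ only yields $U \lesssim \sqrt{n/k}\cdot n^{-1/2} = k^{-1/2}$, and correspondingly a Lipschitz constant $\sigma_{\nu(\by)} \lesssim k^{-1/2}$. Feeding that into the net argument gives $V \lesssim k^{-1/2} + \sqrt{k}\,\sigma = O(1)$, so the Talagrand tail bound combined with the union bound over the $\exp(Cs\log n)$-point net of $s$-sparse vectors requires deviation $t \gtrsim \sqrt{s\log n} \geq 1$, which is vacuous. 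The missing idea is the sparse-block decomposition lemma: from $\|D_\alpha H D_\beta H x\|_2 \leq n^{-1/2}$ and $\|D_\alpha H D_\beta H x\|_1 \leq \sqrt{s/n}$, this vector decomposes as $\sum_i w^{(i)}$ with each $w^{(i)}$ $s$-sparse and $\sum_i \|w^{(i)}\|_2 \leq 2/\sqrt n$, and then one applies the \emph{restricted isometry} of $B$ (not merely $\|B\|$) blockwise to obtain $\|B D_\alpha H D_\beta H x\|_2 \leq 2(1+\delta_s(B))/\sqrt n$. This is what makes $U, \sigma \lesssim (1+\delta)/\sqrt n$ — a factor $\sqrt{k/n}$ better than the operator-norm bound — and hence $V \lesssim k^{-1/2} + (1+\delta)\sqrt{k/n}$, which is exactly where the contraction factor $\kappa = C\sqrt{sk\log n/n}$ and the multiplicative $(1+\delta)$ dependence come from. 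Without invoking the RIP of the current matrix $B$ on sparse blocks, no contraction of the RIP constant is obtained, and the recursion never gets started.

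A secondary structural difference: you propose proving a per-stage high-probability improvement and union-bounding over the stages, whereas the paper instead runs a recursion for the \emph{expectation} $\E[1+\delta^{(i)}] \leq (1+\delta^{(i-1)})\kappa + 1 + Ck^{-1/2}$, applies Markov's inequality once at stage $r$ to get $\delta^{(r)} = O(1)$ with probability $0.995$, and then carries out one final high-probability net argument at stage $r+1$ (now with the $(1+\delta)$ factor replaced by a constant) to land at $\delta^{(r+1)} \lesssim \sqrt{s\log n/k}$. Your route could be made to work, but note that a per-stage high-probability bound must be proved \emph{conditionally} on the random $\delta^{(i)}$, and that Markov alone cannot push $\delta^{(r)}$ below a constant with high probability — so a final refined stage is unavoidable in either route, which is why the theorem statement has $r+1$ rather than $r$ factors.
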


The probability $0.99$ in the above theorem is arbitrary and can be replaced by any different value in $(0,1)$. This effects only
the constant $C$. However, we remark that the present proof does not seem to give the optimal dependence of $C$ 
in terms of the probability bound.

It is presently not clear whether the restrictions $s = {\cal O}(n^{1/3}/\K{(K^{2/3}}\ln^{2/3} n\K{)})$ and $s = {\cal O}(\sqrt{n}/\log n)$ in the above theorems can be removed.
In any case, regimes of small $s$ are the ones of most interest in compressive sensing, anyway.

Section~\ref{cbrtn} is dedicated to proving the Theorem~\ref{thm1}, and Section~\ref{sqrtn} proves Theorem~\ref{thm2}.

\section{The regime $s = {\cal O}(n^{1/3}/\K{(K^{2/3}}\log^{2/3} n\K{)})$} 
\label{cbrtn}

Our first randomized, computationally efficient, RIP-optimal construction involves three
applications of $H$, two random sign diagonal matrices, and a choice of an \emph{arbitrary}
set of $k$ coordinates.
 Fix  $x\in U_s := \{ x \in \C^n: \|x\|_2 = 1, x \mbox{ is } s\mbox{-sparse} \}$
 and let $\epsilon, \epsilon' \in \{\pm 1\}^n$ be two random sign vectors.   
Consider the following random variable, indexed by $x$,
$$ \alpha(x) = \sqrt{\frac n k}\| P_\Omega HD_\epsilon HD_{\epsilon'} Hx\|_2\ ,$$
where $k$ is the cardinality of $\Omega$.
\noindent
It is not hard to see that $\E[\alpha(x)^2] = 1$.  Indeed, denoting $\tilde x = HD_{\epsilon' }Hx$ and conditioning on a fixed value of $\epsilon'$,  for any $i\in \{1,\dots n\}$,

$$ \E_\epsilon \|P_{\{i\}} H D_\epsilon \tilde x\|^2_2 = \|\tilde x\|_2^2/n = 1/n\ .$$

The random variable $\alpha(x)$ is the norm of a decoupled Rademacher chaos of
degree $2$.  For the sake of notational convenience, we denote, for $i,j\in \{1,\dots n\}$,
\begin{equation}\label{def:xij}
 \bx_{ij} = \sqrt {\frac n k} P_\Omega H P_{\{i\}} H P_{\{j\}} H x\ ,
 \end{equation}
so that we can conveniently write 
\[
\alpha(x) =  \|\sum_{i=1}^n\sum_{j=1}^n \epsilon_i\epsilon'_j \bx_{ij}\|_2.
\]
By a seminal result of Talagrand \cite{Tal96}, a Rademacher chaos concentrates around its median. 
We will exploit the following version.
\begin{thm} With a double sequence $\bx_{ij}$, $i,j=1,\hdots n$, of vectors in $\C^n$ and two independent Rademacher
vectors $\epsilon, \epsilon' \in \{\pm 1\}^n$ let 
\[
\alpha = \| \sum_{i=1}^n\sum_{j=1}^n \epsilon_i\epsilon'_j \bx_{ij}\|_2.
\] 
Let $\med_{\alpha}$ be a median of $\alpha$. For 
$\by \in \C^n$ introduce the $n \times n$ matrix $B_\by = (\by\tp  \bx_{ij})_{i,j=1}^n$ and
the parameters 
\begin{align}
U & = \sup_{\by \in \C^n, \|\by\|_2 \leq 1} \|B_{\by}\|\label{defU}\\
V & = \E \sup_{\by \in \C^n, \|\by\|_2 \leq 1} (\|B_{\by} \epsilon\|_2^2 + \|B_{\by}\tp  \epsilon'\|_2^2)^{1/2}.\label{defV}
\end{align}
Then, for $t > 0$,
\begin{equation}\label{tal:chaos2}
\Pr(| \alpha - \med_{\alpha} | \geq t ) \leq 2 \exp\left(-C \min\left\{\frac{t^2}{V^2},\frac{t}{U}\right\} \right)\ .
\end{equation}
\end{thm}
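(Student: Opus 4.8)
The plan is to derive this from Talagrand's original concentration inequality for Rademacher chaos of degree $2$, applied to the function $\epsilon \mapsto \|\sum_{i,j} \epsilon_i \epsilon'_j \bx_{ij}\|_2$, but first reducing from the vector-valued (complex) chaos to a scalar chaos by dualizing the Euclidean norm. Concretely, I would write $\alpha = \sup_{\|\by\|_2 \le 1} \operatorname{Re}\langle \by, \sum_{i,j}\epsilon_i\epsilon'_j \bx_{ij}\rangle = \sup_{\|\by\|_2\le 1} \operatorname{Re} \sum_{i,j} \epsilon_i \epsilon'_j (\overline{\by}\tp \bx_{ij})$, where the supremum can be taken over a countable dense subset of the unit ball so there are no measurability issues. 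Thus $\alpha$ is the supremum of a family of decoupled scalar Rademacher chaoses indexed by $\by$, with coefficient matrix $B_\by = (\by\tp \bx_{ij})_{i,j}$ (up to conjugation, which is harmless after splitting into real and imaginary parts). This is exactly the setting of Talagrand's theorem as stated, e.g., in \cite{Tal96} (and in the decoupled form used by Rudelson--Vershynin and others).

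Next I would invoke the decoupled version of Talagrand's inequality: for $Z = \sup_\by \sum_{i,j}\epsilon_i\epsilon'_j b_{ij}(\by)$ with $B_\by = (b_{ij}(\by))$, one has
\[
\Pr\bigl(|Z - \med_Z| \ge t\bigr) \le 2\exp\left(-C\min\left\{\frac{t^2}{\widetilde V^2}, \frac{t}{\widetilde U}\right\}\right),
\]
where $\widetilde U = \sup_\by \|B_\by\|$ (operator norm — this governs the sub-exponential tail, since it controls the worst-case Lipschitz-type behavior in one $\epsilon$-block after freezing the other) and $\widetilde V = \E \sup_\by (\|B_\by \epsilon\|_2^2 + \|B_\by\tp \epsilon'\|_2^2)^{1/2}$ is the expected "Gaussian-like" fluctuation term (this is the natural quantity bounding the sub-Gaussian regime: freezing $\epsilon'$, the conditional sub-Gaussian parameter of $\by \mapsto \langle B_\by \epsilon', \epsilon\rangle$ is $\|B_\by \epsilon'\|_2$, and symmetrically). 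These $\widetilde U, \widetilde V$ coincide with the $U, V$ in \eqref{defU}, \eqref{defV} once we observe $\by\tp \bx_{ij}$ and its conjugate give the same operator and Hilbert–Schmidt-type norms. A small bookkeeping point: passing from complex to real doubles dimensions and may distort constants by a universal factor, which is absorbed into $C$; similarly the supremum over $\by$ in the real/imaginary decomposition only costs a constant.

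The main obstacle is getting the abstract Talagrand bound into precisely the decoupled, supremum-over-a-class form with the parameters $U$ and $V$ as defined, rather than in the (more commonly quoted) form involving a single chaos or involving $\E Z$ in place of $\med_Z$. I would handle the "median vs.\ mean" point by noting both forms are equivalent up to adjusting $C$ (since the tail bound itself implies $|\E Z - \med_Z| \le C(\widetilde V + \widetilde U)$, and swapping re-centers at the cost of a constant). The decoupling from a general (non-decoupled) chaos is not needed here since our $\alpha$ is already decoupled by construction — so the cleanest route is to cite the decoupled Talagrand inequality directly (as in \cite{Tal96}, or its expositions used in the sparse-recovery literature) and simply verify that our $U$ and $V$ are the objects appearing there. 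The remainder is the identification of norms: $\sup_{\|\by\|_2 \le 1}\|B_\by\|$ is manifestly \eqref{defU}, and the fluctuation functional matches \eqref{defV} after splitting real and imaginary parts and absorbing the factor into $C$.
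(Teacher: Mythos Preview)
Your outline is correct in spirit, but it takes a longer and less direct route than the paper's, and it slightly misidentifies what is in \cite{Tal96}. You dualize the Euclidean norm to rewrite $\alpha$ as a supremum of scalar decoupled chaoses, and then plan to cite a ``decoupled Talagrand inequality'' from \cite{Tal96}. But Talagrand's Theorem~1.2 in \cite{Tal96} is stated for the \emph{coupled}, zero-diagonal chaos $\|\sum_{i\neq j}\epsilon_i\epsilon_j x_{ij}\|$, and it is already formulated in the Banach-space-valued setting---so your dualization step is unnecessary, and you still owe the reduction from decoupled to coupled.

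The paper handles this in one stroke: with $A=(\bx_{ij})_{i,j}$ (a matrix with vector entries), set
\[
S=\tfrac12\begin{pmatrix}0 & A\\ A\tp & 0\end{pmatrix},\qquad \tilde\epsilon=\begin{pmatrix}\epsilon\\ \epsilon'\end{pmatrix},
\]
so that $\tilde\epsilon\tp S\tilde\epsilon=\sum_{i,j}\epsilon_i\epsilon'_j\bx_{ij}$ is now a coupled symmetric chaos in $2n$ Rademacher variables with zero diagonal. Talagrand's Theorem~1.2 applies verbatim, and its parameters computed for $S$ give (up to universal constants absorbed into $C$) exactly the $U$ and $V$ of \eqref{defU}--\eqref{defV}: the matrix $(\by\tp S_{IJ})_{I,J}$ has operator norm $\tfrac12\|B_\by\|$, and applying it to $\tilde\epsilon$ yields a vector of norm $\tfrac12(\|B_\by\epsilon'\|_2^2+\|B_\by\tp\epsilon\|_2^2)^{1/2}$. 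Your complex-to-real bookkeeping and median-versus-mean discussion are then moot, since Talagrand's result is already Banach-space-valued and already centred at the median. What your route buys is a viewpoint that connects the $U,V$ parameters to the scalar-chaos picture; what the paper's block-matrix trick buys is a two-line reduction to a clean citation.
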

\begin{proof} With
$A = (\bx_{ij})_{i,j=1}^n$ and 
\[
S = \frac{1}{2} \left(\begin{matrix} 0 & A \\ A\tp  & 0\end{matrix}\right), \qquad \tilde{\epsilon} = \left(\begin{matrix} \epsilon \\ \epsilon' \end{matrix} \right)
\]
we can rewrite the decoupled chaos as the coupled symmetric chaos
\[
\tilde{\epsilon}\tp  S \tilde{\epsilon} = \sum_{i,j=1}^n \epsilon_i \epsilon_j' \bx_{ij},
\]
where matrix multiplication is extended in an obvious way to matrices with vector-valued entries. Observe that $S$ has zero diagonal.
Therefore, the claim follows from Theorem~1.2  in \cite{Tal96}.
\end{proof}

We bound the quantity $U=U(x)$ in \eqref{defU}, 
where the $\bx_{ij}$ are defined by \eqref{def:xij}. 
Note that $\sum_{i,j} \balpha_i\bbeta_j \by\tp  \bx_{ij} = \by\tp  HD_{\balpha} H D_{\bbeta} H x$, and hence
\begin{eqnarray}
U & = & \sup_{\|\by\|_2, \|\balpha\|_2, \|\bbeta\|_2\leq 1}  \sum_{i=1}^n \sum_{j=1}^n \bar \balpha_i \bbeta_j \by\tp \bx_{ij} 
= \sqrt{\frac n k} \sup_{\by, \balpha, \bbeta} \by\tp  P_\Omega H D_{\balpha\tp} HD_{\bbeta} H x  \nonumber \\
&=& \sqrt{\frac n k}\sup_{\by, \balpha, \bbeta}  \balpha\tp  D_{\by\tp  P_\Omega H} H D_{Hx} \bbeta  \nonumber \\
&\leq&  \sqrt{\frac n k} \sup_{\by, \balpha, \bbeta}\|\balpha \|_2 \cdot  \|D_{\by\tp  P_\Omega
 H}\| \cdot \|D_{Hx}\| \cdot \|\bbeta\|_2 \nonumber\\
&\leq&  \sqrt{\frac n k}  \sup_{\by, \bbeta} \|\by\tp  P_\Omega H\|_{\infty} \cdot \|Hx\|_\infty \cdot \|\bbeta\|_2 \nonumber\\
&\leq&  \sqrt{\frac n k} \sup_{\by} \|\by\tp  P_{\Omega}\|_1 \K{K}n^{-1/2} \cdot \|x\|_1 \K{K}n^{-1/2} 
\leq  \sqrt{\frac n k} k^{1/2} \K{K^2} n^{-1/2} \|x\|_1 n^{-1/2}\nonumber\\ 
&\leq& \K{K^2} \sqrt {s/n} \ . \label{Ubound}
\end{eqnarray}

\noindent
To bound $V$, we define a process $\nu(\by)$  as
\begin{equation}\label{process} \nu(\by) = \sqrt{ \|B_{\by} \epsilon\|_2^2 + \|B_{\by}\tp  \epsilon'\|_2^2}\,
\end{equation}
so that $V = \E \sup_{\|\by\|\leq 1} \nu(\by)$.
By the definition of the vectors $\bx_{ij}$, it clearly suffices to take the supremum on vectors $\by$ supported on $\Omega$.
For any such $\by$, let $\mu_\by = \E \nu(\by)$.
Jensen's inequality yields
\begin{equation}\label{boundexpectation111}
\mu_{\nu(\by)} \leq \sqrt{\E \nu_\by^2} \leq \sqrt {\frac {2n} k} \| D_{\by\tp  P_\Omega H} H D_{Hx} \|_F\ .
\end{equation}
By definition of the Frobenius norm, together with the fact that the matrix elements of $H$ are bounded above by $\K{K}n^{-1/2}$ in absolute
value, we obtain
\begin{equation}\label{boundexpectation}
\mu_{\nu(\by)} \leq \K{K} \|\by\|\sqrt{2/k}\ .
\end{equation}
We use a concentration bound for vector-valued Rademacher sums 
(tail inequality (1.9) in~\cite{LedouxT10}) to 
notice that for any $\by$ and $t>0$,
\begin{equation}\label{tal} \Pr\left(| \nu(\by)-M_{\nu(\by)}|  > t\right) \leq 4\exp\left(-Ct^2/ \sigma_{\nu(\by)}^2 \right)\ ,\end{equation}
\hr{(constant 2 replaced by 4?)}\na{(You're right - fixed)}
where $M_{\nu(\by)}$ is a median of $\nu(\by)$ and, with $A= \sqrt{n/k} P_\Omega$,
\begin{eqnarray}
\sigma_{\nu(\by)} &=& \sup_{\|\beta\|^2_2+\|\gamma\|^2_2\leq 1}   \left (\sum_{j=1}^n\left |\sum_{i=1}^n \beta_i \by\tp  \bx_{ij} + \gamma_i \by\tp \bx_{ji}\right|^2\right)^{1/2}  \nonumber \\ 
&\leq& \sup_{\|\beta\|,\|\gamma\|\leq 1} \|  \by\tp  A D_\beta H D_{Hx} \|_2 + \|  D_{\by\tp  A}HD_{\gamma} Hx \|_2  \nonumber \\
&\leq& \sup_{\stackrel{\|\beta\|,\|\gamma\| \leq 1}{\|\beta'\|,\| \gamma'\| \leq 1}} \by\tp  A D_\beta H D_{\beta'}Hx +  \by\tp  AD_{ \gamma}HD_{\gamma'} Hx  \nonumber \\
&\leq& 2\sup_{\|\beta\|,\|\gamma\|\leq 1} \by\tp  A D_\beta H D_{\gamma}Hx   \nonumber \\
&\leq &  2\sqrt{n/k} \|D_{\by\tp  P_\Omega H}H D_{Hx}\|   \label{onebeforelast111}  \\
& \leq  & 2\|\by\|_2\cdot \|x\|_1 \K{K}/\sqrt n \leq  2\K{K}\sqrt{s/n}\ . \label{boundsigma}
\end{eqnarray}
Upper bounding the expression (\ref{onebeforelast111}) was done exactly as above when upper bounding $U$.
Using  (\ref{boundexpectation}) and the second part of Lemma~\ref{integrate}, we conclude that
\begin{equation}\label{boundMmu}
M_{\nu(\by)} \leq \mu_{\nu(\by)} + C \sigma_\by \leq \K{K}\|\by\|\sqrt{2/k} + C\K{K}\sqrt{s/n}\ .
\end{equation}
We will now bound $V$.  
 To that end, we use 
a general epsilon-net argument. Given a subset $T$ of a Euclidean space, we recall that 
a set $\net \subset T$ is called  $\mu$-separated if $\|\by - \by'\|_2 > \mu$ for all $\by, \by' \in \net$, $\by \neq \by'$.
It is called maximally $\mu$-separated if no additional vector can be added to $\net$ in a $\mu$-separated position.

\begin{lem}\label{baraniuk}
Let $\gamma : \C^m \mapsto \R^+$ be a seminorm, and let $\net$ denote a maximal $\mu$-separated set of Euclidean
unit vectors $\by \in \C^m$ for some $\mu<1$.  
Let
$$S = \sup_{\by\in \net} \gamma(\by)\;\;\; I = \inf_{\by\in\net}\gamma(\by)\ .$$
Then
\begin{eqnarray}\label{net1}
\sup_{\|\by\|=1} \gamma(\by) &\leq& \frac 1{1-\mu} S \label{net1up}\\
\inf_{\|\by\|=1} \gamma(\by) &\geq& I - \frac{\mu}{1-\mu} S \ . \label{net1down}
\end{eqnarray}
In particular, if $\kappa := \sup_{\by\in \net} |\gamma(\by)-1|$, then
\begin{equation}\label{net2}
\sup_{\|\by\|=1} |\gamma(\by)-1| \leq \frac {\kappa+\mu} {1-\mu}\ .
\end{equation}
\end{lem}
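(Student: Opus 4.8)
The plan is to prove Lemma~\ref{baraniuk} by a standard covering/approximation argument, exploiting that a maximal $\mu$-separated set of unit vectors is automatically a $\mu$-net of the unit sphere. First I would record this observation: if $\net$ is maximally $\mu$-separated, then for every unit vector $\by$ there exists $\by_0 \in \net$ with $\|\by - \by_0\|_2 \leq \mu$; otherwise $\by$ could be adjoined to $\net$, contradicting maximality. Then I would decompose an arbitrary unit vector as $\by = \by_0 + (\by - \by_0)$, write $w := \by - \by_0$ with $\|w\|_2 \leq \mu$, and apply the seminorm properties (triangle inequality and homogeneity $\gamma(\lambda z) = |\lambda|\gamma(z)$).

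For \eqref{net1up}, set $L := \sup_{\|\by\|=1}\gamma(\by)$, which is finite since $\gamma$ is a seminorm on a finite-dimensional space (hence continuous) and the unit sphere is compact. For any unit $\by$ choose $\by_0 \in \net$ as above; by subadditivity $\gamma(\by) \leq \gamma(\by_0) + \gamma(w) \leq S + \|w\|_2 \cdot \gamma(w/\|w\|_2) \leq S + \mu L$ (if $w=0$ the middle term is just $0$). Taking the supremum over $\by$ gives $L \leq S + \mu L$, i.e.\ $L \leq S/(1-\mu)$ since $\mu < 1$. For \eqref{net1down}, use the reverse triangle inequality $\gamma(\by) \geq \gamma(\by_0) - \gamma(w) \geq I - \mu L$, and then substitute the just-proved bound $L \leq S/(1-\mu)$ to obtain $\gamma(\by) \geq I - \frac{\mu}{1-\mu}S$; taking the infimum over unit $\by$ finishes this part. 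Finally, \eqref{net2} follows by combining the two: if $\kappa = \sup_{\by\in\net}|\gamma(\by)-1|$ then $S \leq 1+\kappa$ and $I \geq 1-\kappa$, so \eqref{net1up} gives $\sup_{\|\by\|=1}\gamma(\by) \leq \frac{1+\kappa}{1-\mu}$ and hence $\gamma(\by)-1 \leq \frac{1+\kappa}{1-\mu}-1 = \frac{\kappa+\mu}{1-\mu}$, while \eqref{net1down} gives $\gamma(\by)-1 \geq I - \frac{\mu}{1-\mu}S - 1 \geq (1-\kappa) - \frac{\mu(1+\kappa)}{1-\mu} - 1 = -\frac{\kappa(1-\mu) + \mu(1+\kappa)}{1-\mu} = -\frac{\kappa + \mu}{1-\mu}$; together these yield $\sup_{\|\by\|=1}|\gamma(\by)-1| \leq \frac{\kappa+\mu}{1-\mu}$.

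The only mildly delicate point — and the place I would be most careful — is the self-referential step where $L$ appears on both sides of the inequality in the proof of \eqref{net1up}; one must know a priori that $L < \infty$ to rearrange $L \leq S + \mu L$ into $L(1-\mu) \leq S$. This is guaranteed by continuity of $\gamma$ (a finite seminorm on $\C^m$ is Lipschitz with respect to the Euclidean norm) together with compactness of the sphere, so it is a non-issue but worth a remark. Everything else is a routine application of the seminorm axioms, so I would keep the write-up short.
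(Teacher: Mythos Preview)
Your argument is correct and is exactly the standard covering argument the paper has in mind; the paper itself does not write out a proof but simply cites \cite{ve12} for \eqref{net1up} and \cite{badadewa08} for \eqref{net1down}, where the same decomposition $\by = \by_0 + (\by-\by_0)$ and self-referential bound are used. Your remark about needing $L<\infty$ a priori is apt and handled correctly via continuity and compactness.
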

The proof of the bound \eqref{net1up} is contained in \cite{ve12}. The proof of \eqref{net1down} is similar and implicitly
contained in \cite{badadewa08}.

We use the lemma by constructing a maximal $\eta = 0.1$-separated set $\net$ of 
$S_\Omega := \{ \by \in \C^n : \|\by\|_2 = 1, \operatorname{supp} \by \subset \Omega\}$. 
Using a standard
volumetric argument (see e.g.\ \cite[Proposition 10.1]{ra10})
\begin{equation}\label{card}
 \card{\net} \leq (1+2/\eta)^{2k}  
 = 21^{2k}\ .
\end{equation}
We also notice that $\nu(\cdot)$ is a seminorm \na{(indeed a seminorm)} for any fixed $\epsilon, \epsilon'$. 
Using (\ref{net1up}),
$$ \sup_{\stackrel{\by: \by=P_\Omega \by}{ \|\by\|=1}} \nu(\by) \leq \frac 1{1-0.1} \sup_{\by'\in\net} \nu(\by') \  . $$
Taking expectations yields
\begin{equation}\label{boundsups} \E \sup_{\stackrel{\by: \by=P_\Omega \by}{ \|\by\|=1}} \nu(\by) \leq 1.2 \E \sup_{\by'\in\net} \nu(\by') \  . 
\end{equation}
\noindent
The expectation on the right hand side can now be bounded, in light of (\ref{tal}) and using
Lemma~\ref{expmaximaeq} (with $\sigma_i' = 0$), as follows:
\begin{equation}\label{comparegaus2}
 \E  \sup_{\by'\in\net} \nu(\by')  \leq \sup_{\by}M_{\nu(\by)} + \sup_{\by} \sigma_{\nu(\by)}\sqrt{\log \card{\net}}\ .
\end{equation}
Together with (\ref{boundMmu}),  (\ref{card}), and (\ref{boundsigma}), this implies
\begin{equation}\label{comparegaus}
 \E  \sup_{\by'\in\net} \nu(\by')    \leq  C\K{K}\left(\sqrt{1/k} + \sqrt{s/n} + \sqrt{\frac s n k}\right) \ .
\end{equation}
If we now assume that $k \leq \frac 1 2 \sqrt{\frac {n}{s}}$, 
then we conclude from (\ref{boundsups}), 
(\ref{comparegaus}) and  (\ref{Ubound}) that
$$ V=  \E \sup_{\stackrel{\by: \by=P_\Omega \by}{ \|\by\|=1}} \nu(\by) \leq C\K{K}/\sqrt k \quad \mbox{ and } \quad  U \leq 2\K{K^2}/k\ .  $$
\hr{(Why the index 4 at $C_4$ here? Should we use a `generic' constant $C$ everywhere or maybe number through all constants?)}
\na{(A painful issue indeed - either the constant enumeration should be organized, or $C$ should be used everywhere - let's do that after the arxiv)}

\noindent
Plugging these upper bounds into (\ref{tal:chaos2}) we 
conclude  that for all $0 < t\leq  1$
$$ \Pr(|\alpha(x) - \med_{\alpha(x)}| \geq t) \leq 2\exp\left( - Ct^2 k\K{/K^2}\right)\ $$
(because $\min\{t^2/V^2,t/U\} = Ct^2/V^2$ for $0 <t\leq 1$ and for the derived values of $U,V$).
But we also know, using Lemma~\ref{integrate}, that
$ | \sqrt{\E \alpha(x)^2} - M_{\alpha(x)} | \leq C\K{K}/\sqrt k$.  Recalling that $\E \alpha(x)^2 = 1$ and
combining, we conclude that  for $t\leq 1$,
\begin{equation}\label{tail111} \Pr(|\alpha(x) - 1| \geq t) \leq C\exp\left\{ - C t^2k\K{/K^2}\right\}\ .\end{equation}

Now we fix a support set $T \subset \{1,\hdots, N\}$ of size $s$ and consider the complex unit sphere restricted to $T$, i.e.,
$S_T = \{x \in \C^n: \|x\|_2 = 1, \operatorname{supp} x \in T\}$. Let $\net_T$ be a maximal $\eta$-separated set of $S_T$, which has
cardinality at most $(1+2/\eta)^{2s}$ by \eqref{card}. 
By a union bound, we have
\begin{align}
\Pr(\max_{x \in \net_T} |\alpha(x)-1| \geq t) & \leq (1+2/\eta)^{2s} C e^{-C t^2 k} \notag\\
& = C \exp\left( -C t^2 k + 2s \ln(1+2/\eta)\right)\ . \notag
\end{align}
It follows from Lemma \ref{baraniuk} that 
\begin{align}
&\Pr(\max_{x \in U_s} |\alpha(x)-1| \geq (t+\eta)/(1-\eta))\notag\\
& = \Pr\left(\max_{\#T = s} \max_{ x \in S_T} |\alpha(x) - 1| \geq (t+\eta)/(1-\eta)\right)\notag\\
& \leq \sum_{\#T = s} \Pr\left(\max_{x \in \net_T} |\alpha(x) - 1| \geq t\right) 
\leq \left( \begin{matrix} n \\ s \end{matrix}\right)  C \exp\left( -C t^2 k + 2s \ln(1+2/\eta)\right)\notag\\
&\leq \exp\left(-C t^2 k + 2s \ln(1+2/\eta) + s \ln(en/s)\right) \ .\notag
\end{align}
Choosing $\eta = \min\{t,0.5)$ we conclude that $\max_{x \in U_s} |\alpha(x)-1| \leq 4t$ with probability at least $1-\varepsilon$ if
\[
k \geq Ct^2(s (\ln(1+2/t) + \ln(en/s)) + \ln(C \varepsilon^{-1}) \ .
\]
Replacing $t$ by $\delta/4$ and noting that \eqref{thm1:rest:delta} implies $\ln(1+8/\delta) \leq c \ln(en/s)$ concludes
the proof.

\section{The regime $s={\cal O}(\sqrt{n}/\log n)$}
\label{sqrtn}

We now use the idea developed in the previous section to bootstrap an efficient RIP-optimal construction for  a larger regime.

Let $A$ be a fixed $k\times n$ matrix with pairwise orthogonal rows of Euclidean length $\sqrt{n/k}$ each.  Namely, 
\begin{equation}\label{assumptionA}
A A\tp  = \frac n k \Id_k\ .
\end{equation}
\hr{$A^* $ or $A^T $ throughout the paper?}
\na{It's a macro \texttt{$\backslash$tp} now}

The strategy will be to improve the RIP parameter $\delta_s(A)$ of $A$ by replacing $A$ with $\tilde A = A D_\epsilon H D_{\epsilon'} H$.
To analyze the (random) RIP parameter $\delta_s(\tilde A)$, fix an $s$-sparse unit vector $x \in \C^n$.  Now define the random variable
$$ \alpha(x) = \|\tilde A x\|_2 .$$

As before, we note that $\alpha(x)$ is the norm of a decoupled Rademacher chaos of degree $2$ 
in a $k$-dimensional Hilbert space,
which can be conveniently written as $\alpha(x) = \|\sum_{i=1}^n\sum_{j=1}^n \epsilon_i\epsilon'j \bx_{ij}\|_2$, where
$$ \bx_{ij} = AHP_{\{i\}} H P_{\{j\}} H x\ .$$

As in the previous discussion, we bound the invariants of interest $U$ and $V$ as defined in (\ref{defU}) and (\ref{defV}), respectively.  We start by bounding $U$.  By definition,
\begin{eqnarray}\label{boundU2}
U &\leq&  \sup_{\stackrel{\by, \balpha, \bbeta}{\|\by\|_2,\|\balpha\|_2,\|\bbeta\|_2\leq1}} \by\tp  A D_\balpha HD_\bbeta H x\ .
\end{eqnarray}
Notice now that since $x$ is $s$-sparse by assumption, we have $\|x\|_1\leq \sqrt s$ and hence $\|Hx\|_\infty \leq \K{K} \sqrt{s/n}$ so that
$$ \|D_\bbeta Hx\|_2 \leq \K{K} \sqrt{s/n} \quad \mbox{ and } \quad  \|D_\bbeta Hx\|_1 \leq 1\ . $$
The right hand side inequality is due to Cauchy-Schwarz.
In turn, this implies
$$ \|HD_\bbeta Hx\|_\infty \leq 1/\sqrt n \quad \mbox{ and } \quad  \|HD_\bbeta Hx\|_2 \leq \K{K}\sqrt{s/n}\ .$$
\noindent
Therefore, 
\begin{equation}\label{L1L2}\|D_\balpha HD_\bbeta Hx\|_2 \leq 1/\sqrt n \K{\leq K/\sqrt n}\quad \mbox{ and } \quad \|D_\balpha H D_\bbeta Hx\|_1 \leq \K{K} \sqrt{s/n}\ .
\end{equation}
Again, the right hand side inequality is due to Cauchy-Schwarz.  We need the following simple lemma, see also \cite[Lemma 3.1]{plve11}.

\begin{lem}
Let $w\in \C^n$ be such that $\|w\|_1\leq \sqrt s \rho$ and $\|w\|\leq \rho$ for some
integer $s$ and number $\rho>0$.  Then there exist $N=\lceil n/s\rceil$ vectors $w^{(1)},\dots, w^{(N)}$ such that $w^{(i)}$ is $s$-sparse for each $i$, $w=\sum_{i=1}^N w^{(i)}$ and $\sum_{i=1}^N \|w^{(i)}\|_2 \leq 2\rho$.
\end{lem}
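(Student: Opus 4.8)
The plan is to construct the decomposition greedily by repeatedly peeling off the $s$ largest-magnitude coordinates of $w$. First I would sort the coordinates of $w$ so that $|w_{\pi(1)}| \geq |w_{\pi(2)}| \geq \cdots \geq |w_{\pi(n)}|$, partition $\{1,\dots,n\}$ into consecutive blocks $T_1, T_2, \dots, T_N$ of size $s$ each (the last block possibly smaller, with $N = \lceil n/s \rceil$), and set $w^{(i)} = w$ restricted to $T_i$. Then each $w^{(i)}$ is $s$-sparse by construction, and $w = \sum_{i=1}^N w^{(i)}$ trivially. The whole content is the bound $\sum_{i=1}^N \|w^{(i)}\|_2 \leq 2\rho$.

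For that bound I would use the standard comparison between consecutive blocks. For $i \geq 2$, every coordinate in block $T_i$ has magnitude at most the \emph{average} magnitude of the coordinates in the preceding block $T_{i-1}$, since the coordinates are sorted in nonincreasing order and $|T_{i-1}| = s$. Hence $\|w^{(i)}\|_\infty \leq \frac{1}{s}\|w^{(i-1)}\|_1$, which gives $\|w^{(i)}\|_2 \leq \sqrt{s}\,\|w^{(i)}\|_\infty \leq \frac{1}{\sqrt{s}}\|w^{(i-1)}\|_1$. Summing over $i \geq 2$ and using that the blocks $T_1,\dots,T_{N-1}$ have disjoint supports inside $w$,
\[
\sum_{i=2}^N \|w^{(i)}\|_2 \leq \frac{1}{\sqrt s} \sum_{i=1}^{N-1} \|w^{(i)}\|_1 \leq \frac{1}{\sqrt s}\|w\|_1 \leq \frac{1}{\sqrt s}\cdot \sqrt s\,\rho = \rho.
\]
For the first block I use the hypothesis directly: $\|w^{(1)}\|_2 \leq \|w\|_2 \leq \rho$. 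Adding the two estimates yields $\sum_{i=1}^N \|w^{(i)}\|_2 \leq 2\rho$, as claimed.

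There is no real obstacle here; the only points to be careful about are bookkeeping ones. One should check that the argument does not require $s \mid n$ — taking $N = \lceil n/s \rceil$ and allowing the last block to be short is harmless, since a shorter block only makes $\|w^{(N)}\|_2$ smaller and the inequality $\|w^{(i)}\|_\infty \le \frac{1}{s}\|w^{(i-1)}\|_1$ still holds (block $T_{i-1}$ always has exactly $s$ entries when $i-1 < N$, and that is the only case used). One should also note the inclusion $\operatorname{supp}(w^{(i)}) \subseteq \operatorname{supp}(w)$, so zero coordinates of $w$ contribute nothing and the bound $\sum_{i=1}^{N-1}\|w^{(i)}\|_1 \le \|w\|_1$ is exact. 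The lemma as stated is the classical ``shelling'' estimate used throughout compressive sensing (cf.\ \cite[Lemma 3.1]{plve11}), so I would keep the write-up to essentially the three displayed lines above.
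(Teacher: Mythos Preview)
Your proof is correct and essentially identical to the paper's: both sort the coordinates in nonincreasing magnitude, partition into consecutive blocks of size $s$, bound $\|w^{(1)}\|_2\le\|w\|_2\le\rho$ directly, and control the remaining blocks via $\|w^{(i)}\|_2\le\sqrt{s}\,\|w^{(i)}\|_\infty\le s^{-1/2}\|w^{(i-1)}\|_1$ (the paper writes this as $\alpha_i\sqrt{s}$ with $\alpha_i=\|w^{(i)}\|_\infty$ and $\|w^{(i-1)}\|_1\ge s\alpha_i$). Your bookkeeping about the possibly short last block is actually slightly more careful than the paper's.
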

\begin{proof}
Assume wlog that the coordinates $w_1,\dots, w_n$ of $w$ are sorted
so that $|w_1| \geq |w_2| \geq \cdots \geq |w_n|$.
 For $i=1,\dots, N$, let  $$w^{(i)} = (\underbrace{0,\dots, 0}_{(i-1)s\mbox{ times}}, w_{(i-1)s+1},\dots, w_{is}, 0, \dots, 0)\tp  \in \C^n$$ and $\alpha_i = \|w^{(i)}\|_\infty = |w_{(i-1)s+1}|$.
Clearly $w=\sum_{i=1}^N w^{(i)}$, and we have:
\begin{eqnarray}
\sum \|w^{(i)}\| &=& \|w^{(1)}\| + \sum_{i=2}^N \|w^{(i)}\| 
\leq \rho + \sum_{i=2}^N \alpha_i \sqrt s\ . \label{eq:12norm}
\end{eqnarray}
But now notice that for all $i=1,\dots, N$, $\|w^{(i)}\|_1 \geq \alpha_{i+1}s$ (where we define $\alpha_{N+1}=0$), hence
we conclude, using the assumptions, that
$$ \sum_{i=2}^N \alpha_i s \leq \sum_{i=1}^N \|w^{(i)}\|_1  = \|w\|_1 \leq \sqrt s \rho .$$
\noindent
Therefore, $\sum_{i=1}^N \alpha_i \sqrt s \leq \rho$.
Together with (\ref{eq:12norm}), this implies the lemma.
\end{proof}
\begin{rem} Note that the technique of grouping together monotonically decreasing coordinates of a vector in blocks is 
rather standard in compressive sensing,
see for example \cite{carota06-1}  or \cite{DBLP:journals/siamma/KrahmerW11}.
\end{rem}
From (\ref{L1L2}) we conclude that $D_\balpha HD_\bbeta H x$ can be decomposed as $\sum_{i=1}^N w^{(i)}$ as in the lemma
with $\rho = \K{K}n^{-1/2}$.  Using the RIP assumption on $A$, for each $i=1,\dots, N$, $\|Aw^{(i)}\| \leq \|w^{(i)}\|(1+\delta)$.
By the lemma's premise, and using the triangle inequality, this implies
\begin{equation}\label{boundU3} U \leq 2(1+\delta)\K{K}/\sqrt n\ .
\end{equation}

\noindent
Bounding $V$ is done as follows.  We define the process $\nu$ as 
$$ 
\nu(\by) = 
\left(\|B_{\by} \epsilon\|_2^2 + \|B_{\by}\tp  \epsilon'\|_2^2\right)^{1/2}\ ,
$$
where $(B_{\by})_{ij} = \by\tp  \bx_{ij}$,
over the set $\{\by \in \R^k: \|\by\|\leq 1\}$, so that $V = \E \sup_{\by} \nu(\by)$.
For any $\by$, $\nu(\by)$ is a Rademacher sum
in $k$-dimensional Hilbert space.  Thus, we can use \eqref{tal} 
to conclude that for all $\by$,
$$ 
\Pr\left(\nu(\by) > M_{\nu(\by)} + t\right) \leq 4\exp(-t^2/8\sigma_{\nu(\by)}^2)\ ,
$$
where $M_{\nu(\by)}$ is a median of $\nu(\by)$ and 
\begin{eqnarray}
\sigma_{\nu(\by)} &=& \sup_{\|\beta\|^2+\|\gamma\|^2\leq 1}   \left (\sum_{j=1}^n\left |\sum_{i=1}^n \beta_i \by\tp  \bx_{ij} + \gamma_i \by\tp \bx_{ji}\right|^2\right)^{1/2}  \nonumber \\ 
&\leq& \sup_{\|\beta\|,\|\gamma\|\leq 1} \|  \by\tp  AD_\beta H D_{Hx} \| + \|  D_{\by\tp  A}HD_{\gamma} Hx \|  \nonumber \\
&\leq& \sup_{\stackrel{\|\beta\|,\|\gamma\| \leq 1}{\|\beta'\|,\| \gamma'\| \leq 1}} \by\tp  AD_\beta H D_{\beta'}Hx +  \by\tp  AD_{ \gamma}HD_{\gamma'} Hx  \nonumber \\
&\leq& 2\sup_{\|\beta\|,\|\gamma\|\leq 1} \by\tp  AD_\beta H D_{\gamma}Hx \label{onebeforelast} \\
&\leq & 4(1+\delta)\K{K}/\sqrt n\ . \label{boundsigma5}
\end{eqnarray}
For the last inequality, notice that (\ref{onebeforelast}) is bounded by twice the RHS of (\ref{boundU2}), and recall the derivation of
(\ref{boundU3}).
Using the first part of Lemma~\ref{integrate}, we conclude that for all $\by$ such that $\|\by\|=1$,
\begin{equation}\label{boundMnuy}
M_{\nu(\by)} \leq \mu_{\nu(\by)} + C\K{K}(1+\delta)/\sqrt n\ ,
\end{equation} 
where $\mu_{\nu(\by)}$ is the expectation of $\nu(\by)$.  
Jensen's inequality yields
\begin{align}
\mu_{\nu(\by)} &\leq \|D_{H\tp A\tp \by} H D_{Hx}\|_F \leq \|H\tp A\tp \by\| \cdot \|Hx\|\K{K}/\sqrt n
= \|A\tp \by \| \cdot \|x\| \K{K}/\sqrt n \notag\\
& \leq (\sqrt{n/k})\K{K}/\sqrt n = \K{K}k^{-1/2}\ .\notag
\end{align}
Again we notice that for any fixed $\epsilon, \epsilon'$, $\nu$ is a seminorm.
As before, let $\net$ denote a maximal $0.1$-separated set of Euclidean unit vectors in $\C^k$.
Hence by (\ref{net1up}) in  Lemma~\ref{baraniuk}, for any fixed $\epsilon, \epsilon'$,
$$ \sup_{\|\by\|=1} \nu(\by) \leq 1.2 \sup_{\by\in \net} \nu(\by)\ .$$
Taking expectation on both sides and using Lemma~\ref{expmaximaeq} to bound the right hand side (recalling that the
cardinality of $\net$ is at most $21^{2k}$), we conclude
$$ 
V = \E \sup_{\|\by\| = 1} \nu(\by) \leq \sup_{\|\by\|=1} M_{\nu(\by)} +  C\sqrt{k}\sup_{\|\nu(\by)\|=1} \sigma_{\nu(\by)} .$$
By (\ref{boundsigma5}) and by our bound on $M_{\nu(\by)}$,
we conclude
\begin{eqnarray}
V &\leq& \K{K} {k^{-1/2}} + C\K{K}(1+\delta)/\sqrt n + C\K{K} \sqrt{k}(1+\delta)/\sqrt n \nonumber \\ 
&\leq&  \K{K} ({k^{-1/2}}+ (1+\delta)C\sqrt {k/n})\ . \label{boundV3}
\end{eqnarray}
From (\ref{tal:chaos2}), (\ref{boundU3}), (\ref{boundV3}) we then conclude that for all $t>0$, 
\begin{align}
& \Pr(|\alpha(x) - \med_{\alpha(x)}| \geq t) \notag\\
& \leq 2\exp\left( - C \min\left\{ \frac{t^2}{(\K{K}(1+\delta)\sqrt{k/n}+1/\sqrt k)^2}, \frac {t\sqrt n}{\K{K}(1+\delta)}\right\}\right)\ .\label{tal3}
\end{align}
Using the first part of Lemma~\ref{integrate} and recalling that $\E \alpha(x)^2 = 1$ this implies that
\begin{equation}\label{median_var}
\left | M_{\alpha(x)} - 1\right | \leq C\K{K}((1+\delta)\sqrt{ k/n}+1/\sqrt k)\ ,
\end{equation}

We now use the net-technique to pass to the supremum over all $s$-sparse unit vectors to provide
an estimate of the restricted isometry constant.
For each subset $T \subset \{1,\hdots,n\}$ of cardinality $s$ we consider a maximal $\mu$-separated 
$\net_T$ of the unit sphere $S_T$ of complex
unit length vectors with support $T$ where $\mu = 1/k$. By \eqref{card} and since $k < \sqrt{n/s}$ and $s \leq \sqrt{n}$, 
the union $\net = \cup_{\#T = s} \net_T$ is bounded in size
by
\begin{equation}
\# \net \leq \left(\begin{matrix} N \\ s\end{matrix} \right) (1+2/\eta)^{2s} \leq (en/s)^{s} (1+2\sqrt{n/s})^{2s}
\leq \exp(C s \log n) \ . \label{card:bound}
\end{equation}
Using Lemma~\ref{expmaximaeq}, (\ref{tal3}) and (\ref{median_var}), we conclude that
\begin{eqnarray}
\E \sup_{x\in \net} |\alpha(x) -1 | &\leq& C\K{K}((1+\delta)\sqrt{k/n}+1/\sqrt k) 
+ C\K{K}\sqrt{s\log n}(1+\delta)\sqrt{k/n}  \nonumber\\ 
& & + C\K{K} (s\log n)(1+\delta)/\sqrt n \nonumber \\
& \leq & C\K{K}\left((1+\delta)\left (\sqrt{\frac{ s k \log n}{ n}} + \frac {s\log n}{\sqrt n}\right ) + k^{-1/2}\right) \ .\label{boundalpha1_} 
\end{eqnarray}
We will assume in what follows that
\begin{equation}\label{assume7}
s\log n \leq k\ ,
\end{equation}
so that (\ref{boundalpha1_}) takes the simpler form
\begin{eqnarray}
\E \sup_{x\in \net} |\alpha(x) -1 |
& \leq & C\K{K}\left ((1+\delta)\sqrt \frac {sk\log n}{ n} +k^{-1/2}\right)\ .\ \label{boundalpha1} 
\end{eqnarray}

Recalling that $\alpha$ is a seminorm and applying (\ref{net2}) in Lemma~\ref{baraniuk} 
we pass to the set of all $s$-sparse Euclidean unit normed vectors,
\begin{align}
\E \sup_{\stackrel{\|x\|=1}{\|x\|_0\leq s}} |\alpha(x) -1 | & = \E \max_{\# T = s} \sup_{x \in S_T}  |\alpha(x) -1 |\notag\\
& \leq  C\K{K}\left ((1+\delta)\sqrt \frac {sk\log n}{ n} +k^{-1/2}\right)\ .\ \label{boundalpha11} 
\end{align}

\subsection{A Bootstrapping Argument}

Let $\delta'$ denote $\delta_s(\tilde A)$.
Assume henceforth that the parameters $s,k$ satisfy
\begin{equation}\label{sparsityassump}
\kappa := C \K{K}\sqrt{ \frac {sk\log n}{ n}}  < 1/2\ .
\end{equation}
Clearly (\ref{boundalpha1}) is
 a bound on  $\E [\delta']$.
With the new notation, we get
$$ \E [1+\delta'] \leq (1+\delta) \kappa + 1 +  C\K{K}k^{-1/2}\ .
$$
Denote $\tilde A$ by $A^{(1)}$ and $A$ by $A^{(0)}$.  Now consider inductively repeating the above process, obtaining  (for $i\geq 2$) $A^{(i)}$ from $A^{(i-1)}$ by $$A^{(i)} = A^{(i-1)}  D_{\epsilon_{(i)}} H D_{\epsilon'_{(i)}} H\ ,$$ where $\epsilon_{(i)}, \epsilon'_{(i)}$ are independent copies of $\epsilon, \epsilon'$.  Let $\delta^{(i)}$ denote $\delta_s(A^{(i)})$. By independence and 
the principle of conditional expectation, we conclude that
$$ \E[1+\delta^{(i)}] \leq (1+\delta^{(0)})\kappa^i +  \frac{1 + C\K{K}k^{-1/2}}{(1-\kappa)}\leq (1+\delta^{(0)})\kappa^i + 2(1 + C\K{K}k^{-1/2})\ .$$

Assume in what follows that $k$ is large enough so that $C\K{K}k^{-1/2} \leq 1$. Then last inequality conveniently implies $\E[1+\delta^{(i)}] \leq (1+\delta^{(0)})\kappa^i + 4$.
Recall by our definition of $A$  that $\delta=\delta^{(0)}$ can be no more than $\sqrt{n/k}$.
Let $r$ be taken  as
\begin{equation}\label{defi1}
r := \left  \lceil \frac{ -\log(2\sqrt{n/k})}{\log \kappa}\right \rceil 
\end{equation}
so that $(1+\delta(0))\kappa^{r} \leq (1+\sqrt{n/k})\kappa^{r} \leq 2\sqrt{n/k}\kappa^{r} \leq 1$ and hence
$$ \E[1+\delta^{(r)}] \leq 5\ .$$

Using Markov inequality, this implies that with probability
at least, say,  $0.995$
\begin{equation}\label{event} 1+\delta^{(r)} \leq  1000\ .\end{equation}
From now on assume event (\ref{event}) holds.  
Now for an $s$-sparse unit vector $x$, let $x^{(r+1)} := A^{(r+1)} x$.
The assume $k \leq \sqrt{n}$ is equivalent to $1/\sqrt k \geq \sqrt{k/n}$.  Using this, and substituting a constant for $(1+\delta)$, (\ref{tal3}) implies 
\begin{align}
& \Pr(| \|x^{(r+1)}\|_2 - \med_{\|x^{(r+1)}\|_2}| \geq t) \notag\\
 & \leq 2\exp\left( - C \min\left( \frac{t^2}{(\K{K}k^{-1/2})^2 }, t\K{K}\sqrt n\right)\right)\ . \label{tal4}
\end{align}
where $M_{\|x^{(r+1)}\|_2}$ is a median of $\|x^{(r+1)}\|_2$.

Once again we consider maximal $\mu$-separated sets $\net_T$ of $S_T$ with $\mu = 1/k$ 
for each $T \subset \{1,\hdots,N\}$ of size $s$
and form $\net = \cup_{\#T = s} \net_T$.
The cardinality of $\net$ is at most $\exp\{Cs\log n\}$, see \eqref{card:bound}.
We can now use a union bound over $\net$, to conclude that with probability at least $0.995$,
\begin{equation}\label{i1plus1_}
\max_{x\in \net} \left | \|x^{(r+1)}\|_2 - \med_{\|x^{(r+1)}\|_2} \right | \leq  C\K{K}\max\left(\sqrt{\frac{s \log n}{k}},\  \frac{s\log n}{\sqrt n}\right)\ .
\end{equation}
Using (\ref{median_var}), this implies
\begin{eqnarray}
\max_{x\in \net} \left | \|x^{(r+1)}\|_2 - 1 \right |  &\leq& C\K{K}\left( \max\left(\sqrt{\frac{sk \log n}{n}},\  \frac{ s\log n}{\sqrt n}\right) +k^{-1/2}\right) \notag \\
&\leq &  C\K{K}\left(\sqrt{\frac{sk \log n}{n}} + k^{-1/2}\right)\ ,\label{i1plus1} 
\end{eqnarray}
where the last inequality used (\ref{assume7}). As before,
using (\ref{net2}) in Lemma~\ref{baraniuk}, allows us to pass to the set of all $s$-sparse vectors:
\begin{eqnarray}
\sup_{\stackrel{\|x\|=1}{\|x\|_0\leq s}} \left | \|x^{(r+1)}\|_2 - 1 \right |  &\leq& 
  C\K{K}\left(\sqrt{\frac{sk \log n}{n}} + k^{-1/2}\right)\ .\label{i1plus111} 
\end{eqnarray}
\noindent
Recalling the assumption $k \leq \sqrt n$,  this implies
\begin{equation*}
\delta^{(r+1)}  \leq C\K{K}\sqrt{\frac{s\log n} {k}}\ .
\end{equation*}
and the proof of Theorem~\ref{thm2} is concluded.

\appendix

\section{ Properties of Mixed Gaussian and Exponential Processes}\label{app}
\begin{lem}\label{integrate}
Assume $X$ is a  random variable 
such  that for some number $M$ and for all $t\geq 0$,
$$ \Pr[|X-M| > t] \leq C\exp\left \{-\min\{t^2/\sigma_1^2, t/\sigma_2\}\right \}\ ,$$
for some $\sigma_1,\sigma_2 \geq 0$.   Then
\begin{enumerate}
\item $\left |(\E X^2)^{1/2} - |M|\right| \leq C'' \sqrt{\sigma_1^2 + \sigma_2^2} $
\item 
$ |\E X - M| \leq C''  \left |\sigma_1+\sigma_2\right |$

\end{enumerate}
for some constant $C''$ that depends only on $C$.
\end{lem}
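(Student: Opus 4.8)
The plan is a routine tail–integration argument: both assertions reduce to estimating $\E|X-M|$ and $\E(X-M)^2$ by integrating the hypothesized tail bound, and the finiteness of these moments (needed for $(\E X^2)^{1/2}$ to make sense) will fall out of the same computation. The one preliminary observation I would record is the elementary inequality $e^{-\min\{a,b\}} = \max\{e^{-a},e^{-b}\} \le e^{-a} + e^{-b}$, which turns the hypothesis into the more convenient form $\Pr(|X-M| > t) \le C\bigl(e^{-t^2/\sigma_1^2} + e^{-t/\sigma_2}\bigr)$ for all $t \ge 0$, with the convention that a term is dropped when the corresponding $\sigma_i$ vanishes.

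For the second part, I would start from $|\E X - M| = |\E(X-M)| \le \E|X-M|$ and apply the layer–cake formula $\E|X-M| = \int_0^\infty \Pr(|X-M| > t)\,dt$. Plugging in the tail bound and evaluating the resulting Gaussian and exponential integrals, $\int_0^\infty e^{-t^2/\sigma_1^2}\,dt = \tfrac{\sqrt\pi}{2}\sigma_1$ and $\int_0^\infty e^{-t/\sigma_2}\,dt = \sigma_2$, gives $\E|X-M| \le C\bigl(\tfrac{\sqrt\pi}{2}\sigma_1 + \sigma_2\bigr) \le C''|\sigma_1+\sigma_2|$.

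For the first part, since $M$ is deterministic, the (reverse) triangle inequality for the $L^2$ norm applied to $X = (X-M) + M$ yields $\bigl|(\E X^2)^{1/2} - |M|\bigr| \le (\E(X-M)^2)^{1/2}$. Now $\E(X-M)^2 = \int_0^\infty \Pr((X-M)^2 > u)\,du = \int_0^\infty 2t\,\Pr(|X-M| > t)\,dt$ after the substitution $u = t^2$; inserting the tail bound and using $\int_0^\infty 2t\,e^{-t^2/\sigma_1^2}\,dt = \sigma_1^2$ and $\int_0^\infty 2t\,e^{-t/\sigma_2}\,dt = 2\sigma_2^2$ gives $\E(X-M)^2 \le C(\sigma_1^2 + 2\sigma_2^2) \le 2C(\sigma_1^2 + \sigma_2^2)$, hence $(\E(X-M)^2)^{1/2} \le \sqrt{2C}\,\sqrt{\sigma_1^2 + \sigma_2^2}$.

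There is no genuine obstacle; the statement is a soft consequence of integrability of the tail. The only points deserving a word of care are the degenerate cases $\sigma_1 = 0$ and/or $\sigma_2 = 0$ (where the respective integral term is simply absent, and if both vanish then $\Pr(|X-M|>t)=0$ for every $t>0$, so $X = M$ almost surely and the bounds are trivial), and the fact that the hypothesis permits $C \ge 1$, which merely inflates the final constant — allowable since $C''$ may depend on $C$. Taking $C''$ to be an absolute multiple of $\max\{C,\sqrt{2C}\}$ serves both parts at once.
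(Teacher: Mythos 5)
Your proof is correct and follows essentially the same route as the paper: split the tail bound into Gaussian and exponential pieces, integrate to bound $\E(X-M)^2$ and $\E|X-M|$, and then transfer to the claims. The only cosmetic difference is that you pass from $\E(X-M)^2$ to $\bigl|(\E X^2)^{1/2}-|M|\bigr|$ via the reverse triangle inequality in $L^2$, whereas the paper does it by an explicit algebraic expansion together with a sign split on $M$ — your version is slightly cleaner, but it is the same idea.
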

\begin{proof}
For the first part, assume that $M\geq 0$ for the moment.
By integrating and changing variables,
\begin{eqnarray}
\E(X-M)^2 &=& \int_{0}^\infty \Pr[|X-M| \geq \sqrt t] dt  \nonumber \\
&\leq& C\int_{0}^\infty \exp\left \{-t/\sigma_1^2\right\} dt + C\int_0^\infty \exp \left \{-\sqrt t/\sigma_2\right\} dt  \nonumber\\
&=& C\sigma_1^2\int_{0}^\infty e^{-s}ds + 2C\sigma_2^2\int_{0}^\infty e^{-s}sds \nonumber \\
&\leq& (\sigma_1^2+\sigma_2^2) C' \label{appendix_ineq}
\end{eqnarray}
for some constant $C'>0$.
On the other hand, with $p^2 = \E X^2$ we have $\E(X-M)^2 = p^2 + M^2 - 2M\E X$ and $\E X \leq \E |X| \leq \sqrt{ \E X^2} = p$.  We hence conclude that $(p-M)^2 \leq (\sigma_1^2+\sigma_2^2)C'$.
If $M < 0$ then we simply replace the random variable $X$ with $-X$ and
$M$ with $-M$.

The second part is obtained in the same way by integrating to bound $\E |X-M|$.

\end{proof}

\def\Cmxma{C_{\operatorname{mx}}}
\begin{lem}\label{expmaxima}
Assume that $X_i$, $i=1,\hdots,N$ are random variables 
such that for each $i$ there exist numbers $M_i$ and $\sigma_i,\sigma'_i\geq 0$ such that for all $t\geq 0$,
$$ \Pr[|X_i-M_i| > t] \leq 2\exp\left \{-\min\{t^2/\sigma_i^2, t/\sigma'_i\}\right \}\ .$$
Then
\begin{equation}\label{expmaximaeq}
\E \sup_i|X_i-M_i| \leq C\left( \sqrt{\log N}\sup_i \sigma_i + \log N \sup_i\sigma'_i\right ) \ .
\end{equation}
\end{lem}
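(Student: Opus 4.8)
The plan is to prove Lemma~\ref{expmaxima} by the standard "integrate the maximal tail bound" argument, reducing the bound on $\E\sup_i|X_i-M_i|$ to a union bound plus integration over the threshold. First I would set $Y_i = |X_i - M_i|$ and $Y = \sup_{i\le N} Y_i$, and write $\E Y = \int_0^\infty \Pr(Y > t)\,dt$. The union bound gives $\Pr(Y>t) \le \sum_i \Pr(Y_i>t) \le 2N \exp(-\min\{t^2/\sigma^2, t/\sigma'\})$, where I abbreviate $\sigma = \sup_i\sigma_i$ and $\sigma' = \sup_i\sigma'_i$ (the bound only gets worse by enlarging each $\sigma_i,\sigma'_i$ to these suprema, which is all we need). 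The issue is that $2N e^{-\cdots}$ is useless for small $t$ (it exceeds $1$), so I would split the integral at a suitable threshold $t_0$ chosen so that the tail bound becomes genuinely small, namely $t_0 = C_0(\sqrt{\log N}\,\sigma + \log N\,\sigma')$ for an appropriate absolute constant $C_0$.

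The key steps in order: (i) For $t \le t_0$, bound $\Pr(Y>t)\le 1$, contributing at most $t_0 = C_0(\sqrt{\log N}\,\sigma + \log N\,\sigma')$ to $\E Y$. (ii) For $t \ge t_0$, note that on this range $\min\{t^2/\sigma^2, t/\sigma'\}$ dominates $2\log N$ plus a growing term. Concretely, in the "subgaussian regime" where $t/\sigma' \ge t^2/\sigma^2$ (i.e. $t \le \sigma^2/\sigma'$), one has $t^2/\sigma^2 \ge 2\log N + t^2/(2\sigma^2)$ once $t \ge 2\sqrt{\log N}\,\sigma$; in the "subexponential regime" $t \ge \sigma^2/\sigma'$ one has $t/\sigma' \ge 2\log N + t/(2\sigma')$ once $t \ge 4\log N\,\sigma'$. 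Choosing $C_0 = 4$ ensures $t_0$ exceeds both thresholds, so for $t\ge t_0$, $\Pr(Y>t)\le 2N e^{-2\log N} e^{-\min\{t^2/(2\sigma^2),\,t/(2\sigma')\}} \le 2 e^{-\min\{t^2/(2\sigma^2),\,t/(2\sigma')\}}$. (iii) Integrate the remaining piece: $\int_0^\infty 2 e^{-\min\{t^2/(2\sigma^2),t/(2\sigma')\}}\,dt \le \int_0^\infty 2e^{-t^2/(2\sigma^2)}\,dt + \int_0^\infty 2e^{-t/(2\sigma')}\,dt \le C(\sigma + \sigma')$, using $\min\{a,b\}\ge$ neither individually but bounding $e^{-\min\{a,b\}} = \max\{e^{-a},e^{-b}\} \le e^{-a}+e^{-b}$. (iv) Combine: $\E Y \le C_0(\sqrt{\log N}\,\sigma + \log N\,\sigma') + C(\sigma+\sigma') \le C'(\sqrt{\log N}\,\sigma + \log N\,\sigma')$ since $\log N \ge 1$ can be assumed (for $N\ge 2$; the case $N=1$ is trivial as then $\sqrt{\log N}=0$ and one argues directly from Lemma~\ref{integrate} or just replaces $\log N$ by $\max\{\log N,1\}$ in the statement).

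The main obstacle — really the only subtlety — is handling the two-regime structure of the exponent $\min\{t^2/\sigma^2, t/\sigma'\}$ cleanly, so that the factor $N$ from the union bound is absorbed against $e^{-\min\{\cdots\}}$ at the crossover point and the leftover integral produces exactly the $\sqrt{\log N}\,\sigma + \log N\,\sigma'$ shape rather than something weaker. The trick is to pick the splitting threshold $t_0$ large enough to kill $2N$ in \emph{both} regimes simultaneously, which is exactly why both a $\sqrt{\log N}\,\sigma$ term and a $\log N\,\sigma'$ term must appear in the final bound. Everything else is routine Gaussian/exponential integration, and no step requires more than elementary estimates on $\int e^{-t^2}$ and $\int e^{-t}$.
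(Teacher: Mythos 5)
Your proof is correct, and it is precisely the standard tail-integration argument the paper gestures at (the paper itself gives no proof beyond ``integration by parts, very similar to the derivation of (3.6) in Ledoux--Talagrand''). Splitting $\int_0^\infty \Pr(\sup_i|X_i-M_i|>t)\,dt$ at $t_0 = C_0(\sqrt{\log N}\,\sigma + \log N\,\sigma')$, absorbing the factor $N$ from the union bound by halving the exponent for $t\ge t_0$, and integrating the remaining Gaussian and exponential tails is exactly what that reference entails; your observation that the $N=1$ case requires either $\log N\ge 1$ or replacing $\log N$ by $\max\{\log N,1\}$ is a fair and correct caveat about the statement as written, and is harmless in the paper's applications where $N$ is the cardinality of a net.
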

Note that the variables $X_i$ are not required to be independent.
The proof can be done by integration by parts, very similar to the derivation of (3.6) in \cite{LedouxT10}.

\section*{Acknowledgements}

Nir Ailon acknowledges the support of a Marie Curie International Reintegration Grant PIRG07-GA-2010-268403, and a grant from the GIF, the
German-Israeli Foundation for Scientific Research and Development.  Work was done while his visiting the Hausdorff Center for Mathematics
at the University of Bonn.
 Holger Rauhut acknowledges support by the Hausdorff Center for Mathematics at the University of Bonn and 
funding by the European Research Council through the grant StG 258926.


\begin{thebibliography}{10}

\bibitem{DBLP:journals/jcss/Achlioptas03}
D.~Achlioptas.
\newblock Database-friendly random projections: {J}ohnson-{L}indenstrauss with
  binary coins.
\newblock {\em J. Comput. Syst. Sci.}, 66(4):671--687, 2003.

\bibitem{aich09}
N.~{A}ilon and B.~{C}hazelle.
\newblock {T}he fast {J}ohnson-{L}indenstrauss transform and approximate
  nearest neighbors.
\newblock {\em {S}{I}{A}{M} {J}. {C}omput.}, 39(1):302--322, 2009.

\bibitem{DBLP:journals/dcg/AilonL09}
N.~Ailon and E.~Liberty.
\newblock Fast dimension reduction using rademacher series on dual bch codes.
\newblock {\em Discrete {\&} Computational Geometry}, 42(4):615--630, 2009.

\bibitem{badadewa08}
R.~G. {B}araniuk, M.~{D}avenport, R.~A. {D}e{V}ore, and M.~{W}akin.
\newblock {A} simple proof of the restricted isometry property for random
  matrices.
\newblock {\em {C}onstr. {A}pprox.}, 28(3):253--263, 2008.

\bibitem{blda09}
T.~{B}lumensath and M.~{D}avies.
\newblock {I}terative hard thresholding for compressed sensing.
\newblock {\em {I}ndependent {C}omponent {A}nalysis and {S}ignal {S}eparation},
  27(3):265--274, 2009.

\bibitem{carota06-1}
E.~J. {C}and{\`e}s, J.~K. {R}omberg, and T.~{T}ao.
\newblock {S}table signal recovery from incomplete and inaccurate measurements.
\newblock {\em {C}omm. {P}ure {A}ppl. {M}ath.}, 59(8):1207--1223, 2006.

\bibitem{cata06}
E.~J. {C}and{\`e}s and T.~{T}ao.
\newblock {N}ear optimal signal recovery from random projections: universal
  encoding strategies?
\newblock {\em {I}{E}{E}{E} {T}rans. {I}nform. {T}heory}, 52(12):5406--5425,
  2006.

\bibitem{do06-2}
D.~L. {D}onoho.
\newblock {C}ompressed sensing.
\newblock {\em {I}{E}{E}{E} {T}rans. {I}nform. {T}heory}, 52(4):1289--1306,
  2006.

\bibitem{fora11}
M.~{F}ornasier and H.~{R}auhut.
\newblock {C}ompressive sensing.
\newblock In O.~{S}cherzer, editor, {\em {H}andbook of {M}athematical {M}ethods
  in {I}maging}, pages 187--228. {S}pringer, 2011.

\bibitem{fo10}
S.~{F}oucart.
\newblock {S}parse recovery algorithms: sufficient conditions in terms of
  restricted isometry constants.
\newblock In {\em {P}roceedings of the 13th {I}nternational {C}onference on
  {A}pproximation {T}heory}, 2010.

\bibitem{FoucartPRU10}
S.~{F}oucart, A.~{P}ajor, H.~{R}auhut, and T.~{U}llrich.
\newblock The {G}elfand widths of $\ell_p$-balls for $0 < p \leq 1$.
\newblock {\em J. Complexity}, 26:629--640, 2010.

\bibitem{fora13}
S.~{F}oucart and H.~{R}auhut.
\newblock {\em {A} Mathematical Introduction to Compressive Sensing}.
\newblock {B}irkh{\"a}user, to appear.

\bibitem{Indyk:1998:ANN:276698.276876}
P.~Indyk and R.~Motwani.
\newblock Approximate nearest neighbors: towards removing the curse of
  dimensionality.
\newblock In {\em Proceedings of the thirtieth annual ACM symposium on Theory
  of computing}, STOC '98, pages 604--613, 1998.

\bibitem{KrahmerMR12}
F.~Krahmer, S.~Mendelson, and H.~Rauhut.
\newblock Suprema of chaos processes and the restricted isometry property.
\newblock {\em {C}omm. {P}ure {A}ppl. {M}ath.}, to appear.

\bibitem{DBLP:journals/siamma/KrahmerW11}
F.~Krahmer and R.~Ward.
\newblock New and improved {J}ohnson-{L}indenstrauss embeddings via the
  restricted isometry property.
\newblock {\em SIAM J. Math. Analysis}, 43(3):1269--1281, 2011.

\bibitem{LedouxT10}
M.~{L}edoux and M.~{T}alagrand.
\newblock {\em Probability in Banach spaces: Isoperimetry and processes}.
\newblock Series of modern surveys in mathematics. Springer-Verlag, 2010.

\bibitem{mepato09}
S.~{M}endelson, A.~{P}ajor, and N.~{T}omczak {J}aegermann.
\newblock {U}niform uncertainty principle for {B}ernoulli and subgaussian
  ensembles.
\newblock {\em {C}onstr. {A}pprox.}, 28(3):277--289, 2009.

\bibitem{NelsonPW12}
J.~Nelson, E.~Price, and M.~Wootters.
\newblock New constructions of {R}{I}{P} matrices with fast multiplication and
  fewer rows.
\newblock {\em arxiv:1211.0986}, 2012.

\bibitem{plve11}
Y.~{P}lan and R.~{V}ershynin.
\newblock {O}ne-bit compressed sensing by linear programming.
\newblock {\em {C}omm. {P}ure {A}ppl. {M}ath.}, to appear.

\bibitem{ra10}
H.~{R}auhut.
\newblock {C}ompressive {S}ensing and {S}tructured {R}andom {M}atrices.
\newblock In M.~{F}ornasier, editor, {\em {T}heoretical {F}oundations and
  {N}umerical {M}ethods for {S}parse {R}ecovery}, volume~9 of {\em {R}adon
  {S}eries {C}omp. {A}ppl. {M}ath.}, pages 1--92. de{G}ruyter, 2010.

\bibitem{rarotr12}
H.~{R}auhut, J.~K. {R}omberg, and J.~A. {T}ropp.
\newblock {R}estricted isometries for partial random circulant matrices.
\newblock {\em {A}ppl. {C}omput. {H}armon. {A}nal.}, 32(2):242--254, 2012.

\bibitem{ruve08}
M.~{R}udelson and R.~{V}ershynin.
\newblock {O}n sparse reconstruction from {F}ourier and {G}aussian
  measurements.
\newblock {\em {C}omm. {P}ure {A}ppl. {M}ath.}, 61:1025--1045, 2008.

\bibitem{Tal96}
M.~Talagrand.
\newblock New concentration inequalities in product spaces.
\newblock {\em Invent. Math.}, 126:505--563, 1996.

\bibitem{netr08}
J.~A. {T}ropp and D.~{N}eedell.
\newblock {C}o{S}a{M}{P}: {I}terative signal recovery from incomplete and
  inaccurate samples.
\newblock {\em {A}ppl. {C}omput. {H}armon. {A}nal.}, 26(3):301--321, 2008.

\bibitem{ve12}
R.~{V}ershynin.
\newblock {I}ntroduction to the non-asymptotic analysis of random matrices.
\newblock In Y.~{E}ldar and G.~{K}utyniok, editors, {\em {C}ompressed
  {S}ensing: {T}heory and {A}pplications}, pages 210--268. {C}ambridge {U}niv
  {P}ress, 2012.

\end{thebibliography}

\end{document}